\title{%
    Finite Markov chains coupled to general Markov processes and an application
    to metastability I
}
\author{%
    Thomas G. Kurtz\\
    University of Wisconsin-Madison
    \and
    Jason Swanson\thanks{%
        Supported in part by the VIGRE grant of University of Wisconsin-Madison
        and by NSA grant H98230-09-1-0079.
    }\\
    University of Central Florida
}
\date{January 6, 2021}
\begin{document}

\newtheorem{thm}{Theorem}[section]
\newtheorem{cor}[thm]{Corollary}
\newtheorem{prop}[thm]{Proposition}
\newtheorem{lemma}[thm]{Lemma}
\newtheorem{assum}[thm]{Assumption}
\newtheorem{condition}[thm]{Condition}
\theoremstyle{definition}
\newtheorem{defn}[thm]{Definition}
\newtheorem{expl}[thm]{Example}
\theoremstyle{remark}
\newtheorem{rmk}[thm]{Remark}
\numberwithin{equation}{section}

\def\al{\alpha}
\def\be{\beta}
\def\ga{\gamma}
\def\Ga{\Gamma}
\def\de{\delta}
\def\De{\Delta}
\def\ep{\varepsilon}
\def\eps{\varepsilon}
\def\ze{\zeta}
\def\th{\theta}
\def\ka{\kappa}
\def\la{\lambda}
\def\La{\Lambda}
\def\vpi{\varpi}
\def\si{\sigma}
\def\Si{\Sigma}
\def\ph{\varphi}
\def\om{\omega}
\def\Om{\Omega}

\def\wt{\widetilde}
\def\wh{\widehat}
\def\wch{\widecheck}
\def\ol{\overline}
\def\ds{\displaystyle}

\def\nab{\nabla}
\def\pa{\partial}
\def\To{\Rightarrow}
\def\eqd{\overset{d}{=}}
\def\emp{\emptyset}

\def\pf{\noindent{\bf Proof.} }
\def\qed{\hfill $\Box$}

\providecommand{\flr}[1]{\left\lfloor{#1}\right\rfloor}
\providecommand{\ceil}[1]{\left\lceil{#1}\right\rceil}
\providecommand{\ang}[1]{\left\langle{#1}\right\rangle}


\def\bA{\mathbb{A}}
\def\bB{\mathbb{B}}
\def\bC{\mathbb{C}}
\def\bD{\mathbb{D}}
\def\bE{\mathbb{E}}
\def\bF{\mathbb{F}}
\def\bG{\mathbb{G}}
\def\bH{\mathbb{H}}
\def\bI{\mathbb{I}}
\def\bJ{\mathbb{J}}
\def\bK{\mathbb{K}}
\def\bL{\mathbb{L}}
\def\bM{\mathbb{M}}
\def\bN{\mathbb{N}}
\def\bO{\mathbb{O}}
\def\bP{\mathbb{P}}
\def\bQ{\mathbb{Q}}
\def\bR{\mathbb{R}}
\def\bS{\mathbb{S}}
\def\bT{\mathbb{T}}
\def\bU{\mathbb{U}}
\def\bV{\mathbb{V}}
\def\bW{\mathbb{W}}
\def\bX{\mathbb{X}}
\def\bY{\mathbb{Y}}
\def\bZ{\mathbb{Z}}

\def\bfA{{\bf A}}
\def\bfB{{\bf B}}
\def\bfC{{\bf C}}
\def\bfD{{\bf D}}
\def\bfE{{\bf E}}
\def\bfF{{\bf F}}
\def\bfG{{\bf G}}
\def\bfH{{\bf H}}
\def\bfI{{\bf I}}
\def\bfJ{{\bf J}}
\def\bfK{{\bf K}}
\def\bfL{{\bf L}}
\def\bfM{{\bf M}}
\def\bfN{{\bf N}}
\def\bfO{{\bf O}}
\def\bfP{{\bf P}}
\def\bfQ{{\bf Q}}
\def\bfR{{\bf R}}
\def\bfS{{\bf S}}
\def\bfT{{\bf T}}
\def\bfU{{\bf U}}
\def\bfV{{\bf V}}
\def\bfW{{\bf W}}
\def\bfX{{\bf X}}
\def\bfY{{\bf Y}}
\def\bfZ{{\bf Z}}

\def\cA{\mathcal{A}}
\def\cB{\mathcal{B}}
\def\cC{\mathcal{C}}
\def\cD{\mathcal{D}}
\def\cE{\mathcal{E}}
\def\cF{\mathcal{F}}
\def\cG{\mathcal{G}}
\def\cH{\mathcal{H}}
\def\cI{\mathcal{I}}
\def\cJ{\mathcal{J}}
\def\cK{\mathcal{K}}
\def\cL{\mathcal{L}}
\def\cM{\mathcal{M}}
\def\cN{\mathcal{N}}
\def\cO{\mathcal{O}}
\def\cP{\mathcal{P}}
\def\cQ{\mathcal{Q}}
\def\cR{\mathcal{R}}
\def\cS{\mathcal{S}}
\def\cT{\mathcal{T}}
\def\cU{\mathcal{U}}
\def\cV{\mathcal{V}}
\def\cW{\mathcal{W}}
\def\cX{\mathcal{X}}
\def\cY{\mathcal{Y}}
\def\cZ{\mathcal{Z}}

\def\sA{\mathscr{A}}
\def\sB{\mathscr{B}}
\def\sC{\mathscr{C}}
\def\sD{\mathscr{D}}
\def\sE{\mathscr{E}}
\def\sF{\mathscr{F}}
\def\sG{\mathscr{G}}
\def\sH{\mathscr{H}}
\def\sI{\mathscr{I}}
\def\sJ{\mathscr{J}}
\def\sK{\mathscr{K}}
\def\sL{\mathscr{L}}
\def\sM{\mathscr{M}}
\def\sN{\mathscr{N}}
\def\sO{\mathscr{O}}
\def\sP{\mathscr{P}}
\def\sQ{\mathscr{Q}}
\def\sR{\mathscr{R}}
\def\sS{\mathscr{S}}
\def\sT{\mathscr{T}}
\def\sU{\mathscr{U}}
\def\sV{\mathscr{V}}
\def\sW{\mathscr{W}}
\def\sX{\mathscr{X}}
\def\sY{\mathscr{Y}}
\def\sZ{\mathscr{Z}}

\def\fA{\mathfrak{A}}
\def\fB{\mathfrak{B}}
\def\fC{\mathfrak{C}}
\def\fD{\mathfrak{D}}
\def\fE{\mathfrak{E}}
\def\fF{\mathfrak{F}}
\def\fG{\mathfrak{G}}
\def\fH{\mathfrak{H}}
\def\fI{\mathfrak{I}}
\def\fJ{\mathfrak{J}}
\def\fK{\mathfrak{K}}
\def\fL{\mathfrak{L}}
\def\fM{\mathfrak{M}}
\def\fN{\mathfrak{N}}
\def\fO{\mathfrak{O}}
\def\fP{\mathfrak{P}}
\def\fQ{\mathfrak{Q}}
\def\fR{\mathfrak{R}}
\def\fS{\mathfrak{S}}
\def\fT{\mathfrak{T}}
\def\fU{\mathfrak{U}}
\def\fV{\mathfrak{V}}
\def\fW{\mathfrak{W}}
\def\fX{\mathfrak{X}}
\def\fY{\mathfrak{Y}}
\def\fZ{\mathfrak{Z}}

\maketitle

\begin{abstract}
    We consider a diffusion given by a small noise perturbation of a dynamical
    system driven by a potential function with a finite number of local minima.
    The  classical results of Freidlin and Wentzell show that the time this
    diffusion spends in the domain of attraction of one of these local minima is
    approximately exponentially distributed and hence the diffusion should
    behave approximately like a Markov chain on the local minima. By the work of
    Bovier and collaborators, the local minima can be associated with the small
    eigenvalues of the diffusion generator. Applying a Markov mapping theorem,
    we use the eigenfunctions of the generator to couple this diffusion to a
    Markov chain whose generator has eigenvalues equal to the eigenvalues of the
    diffusion generator that are associated with the local minima and establish
    explicit formulas for conditional probabilities associated with this
    coupling. The fundamental question then becomes to relate the coupled Markov
    chain to the approximate Markov chain suggested by the results of Freidlin
    and Wentzel. In Part II of this work, we provide a complete analysis of this
    relationship in the special case of a double-well potential in one
    dimension. More generally, the coupling can be constructed for a general
    class of Markov processes and any finite set of eigenvalues of the
    generator.

    \bigskip

    \noindent {\bf AMS subject classifications:} Primary 60J60; secondary 60H10,
    60F10, 60J27, 60J28, 34L10

    \noindent {\bf Keywords and phrases:} conditional distributions, coupling,
    eigenfunctions, Freidlin and Wentzell, Markov mapping theorem, Markov
    processes, metastability
\end{abstract}

\section{Introduction}\label{S:intro}

Fix $\ep>0$ and consider the stochastic process,
\begin{equation}\label{SDE}
    X_{\ep}(t) = X_{\ep}(0)
        - \int_0^t \nab F(X_{\ep}(s)) \,ds + \sqrt {2\ep}\,W(t),
\end{equation}
where $F\in C^3(\bR^d)$ and $W$ is a standard $d$-dimensional Brownian motion.
For the precise assumptions on $F$, see Section \ref{S:assumpF}. Let $\ph$ be
the solution to the differential equation $\ph'=-\nab F(\ph)$. We will use
$\varphi_x$ to denote the solution with  $\ph_x(0)=x$. The process $X_{\ep}$ is
a small-noise perturbation of the deterministic process $\ph$.

Suppose that $\cM=\{x_0,\ldots ,x_m\}$ is the set of local minima of the
potential function $F$. The points $x_j$ are stable points for the process
$\ph$. For $X_{\ep}$, however, they are not stable. The process $X_{\ep}$ will
initially gravitate toward one of the $x_j$ and move about randomly in a small
neighborhood of this point. But after an exponential amount of time, a large
fluctuation of the noise term will move the process $X_{\ep}$ out of the domain
of attraction of $x_j$ and into the domain of attraction of one of the other
minima. We say that each point $x_j$ is a point of \textit{metastability} for
the process $X_{\ep}$.

If $X$ is a cadlag process in a complete, separable metric space $S$ adapted to
a right continuous filtration (assumptions that are immediately satisfied for
all processes considered here) and $H$ is either open or closed, then
$\tau^X_H=\inf\{t>0:X(t)\text{ or }X(t-)\in H\}$ is a stopping time (see, for
example, \cite[Proposition 1.5]{EK}). If $x\in S$, let
$\tau^X_x=\tau^X_{\{x\}}$.  We may sometimes also write $\tau^X(H)$, and if the
process is understood, we may omit the superscript.

Let
\begin{equation}\label{domatt}
    D_j = \{x \in \bR^d: \lim_{t\to\infty}\ph_x(t) = x_j\}
\end{equation}
be the domains of attraction of the local minima. It is well-known (see, for
example, \cite{FW}, \cite[Theorem 3.2]{BEGK}, \cite[Theorems 1.2 and 1.4]{BGK},
and \cite{E}) that as $\ep \to 0$, $\tau^{X_{\ep}}(D_j^c)$ is asymptotically
exponentially distributed under $P^{x_j}$.  It is therefore common to
approximate the process $X_{\ep}$ by a continuous time Markov chain on the set
$\cM$ (or equivalently on $\{0,\ldots ,m\}$). In fact, metastability can be
defined in terms of convergence, in an appropriate sense, to a continuous time
Markov chain. (See the survey article \cite{Landim2019} for details.) Beltr\'
{a}n and Landim \cite{Beltran2010,Beltran2012} introduced a general method for
proving the metastability of a Markov chain. Along similar lines, Rezakhanlou
and Seo \cite{Rezakhanlou2018} developed such a method for diffusions. For an
alternative approach using intertwining relations, see \cite{Avena2017}.

In this project, for each $\ep>0$, we wish to capture this approximate Markov
chain behavior by coupling $X_{\ep}$ to a continuous time Markov chain,
$Y_{\ep}$, on $\{0,\ldots ,m\}$. We will refer to the indexed collection of
coupled processes, $\{(X_{\ep},Y_{\ep}):\ep>0\}$, as a \textit{coupling
sequence}. Our objective is to investigate the possibility of constructing a
coupling sequence which satisfies
\begin{equation}\label{tracking-req}
    P(X_{\ep}(t) \in D_j \mid Y_{\ep}(t) = j) \to 1
\end{equation}
as $\ep\to 0$, for all $j$. We also want the transition rate for $Y_{\ep}$ to
go from $i$ to $j$ to be asymptotically equivalent as $\ep\to 0$ to the
transition rate for $X_{\ep}$ to go from a neighborhood of $x_i$ to a
neighborhood of $x_j$. That is, we would like
\begin{equation}\label{rate-match-req}
    E^i[\tau_j^{Y_\ep}] \sim E^{x_i}[\tau^{X_\ep}_{B_{\rho}(x_0)}]
\end{equation}
as $\ep\to0$, for all $i$ and $j$, where $B_{\rho}(x)$ is the ball of radius
$\rho$ centered at $x$.

In this paper (Part I), we develop our general coupling construction. The
construction goes beyond the specific case of interest here. It is a
construction that builds a coupling between a Markov process on a complete and
separable metric space and a continuous-time Markov chain where the generators
of the two processes have common eigenvalues. The coupling is done in such a way
that observations of the chain yield quantifiable conditional probabilities
about the process. This coupling construction is built in Section 
\ref{S:gencoup} and uses the Markov mapping theorem (Theorem \ref{T:MMT}). In
Section \ref{S:coupRd}, we apply this construction method to reversible
diffusions in $\bR^d$ driven by a potential function with a finite number of
local minima.

With this coupling construction in hand, we can build the coupling sequences
described above. In our follow-up work (Part II), we take up the question of the
existence and uniqueness of a coupling sequence that satisfies requirements
\eqref{tracking-req} and \eqref{rate-match-req}.

\section{The general coupling}\label{S:gencoup}

\subsection{Assumptions and definitions}

Given a Markov process $X$ with generator $A$ satisfying Assumption
\ref{A:eigmeas}, we will use the Markov mapping theorem to construct a coupled
pair, $(X,Y)$, in such a way that for a specified class of initial
distributions, $Y$ is a continuous-time Markov chain on a finite state space. The
construction then allows us to explicitly compute the conditional distribution
of $X$ given observations of $Y$.

For explicit definitions of the notation used here and throughout, see Section
\ref{S:MMT}.

\begin{assum}\label{A:eigmeas} 
    Let $E$ be a complete and separable metric space.
    \begin{enumerate}[(i)]
        \item   $A\subset\ol C(E)\times\ol C(E)$.
        \item   $A$ has a stationary distribution $\vpi\in\cP(E)$, which implies
                $\int_EAf\,d\vpi=0$ for all $f\in\cD(A)$.
        \item   For some $m$, there exist signed measures $\vpi_1, \ldots,
                \vpi_m$ on $E$ and positive real numbers $\la_1,\ldots ,\la_m$
                such that, for each $k\in \{1,\ldots,m\}$ and $f\in\cD(A)$,
                \begin{align}
                    &\int_EAf\,d\vpi_k = -\la_k\int_Ef\,d\vpi_k,\label{i}\\
                    &\vpi_k(dx) = \eta_k(x)\vpi(dx),
                        \text{ where }\eta_k \in \ol C(E)\label{ii},\\
                    &\vpi_k(E) = 0.\label{iii}
                \end{align}
    \end{enumerate}
    We define $\vpi_0=\vpi$ and $\eta_0=1$.
\end{assum}

\begin{rmk}
    If $(1,0)\in A$, then \eqref{i} implies \eqref{iii}.
\end{rmk}

\begin{rmk}
    In what follows, we will make use of the assumption that the functions
    $\eta_k$ are continuous.  However, this assumption can be relaxed by
    appealing to the methods in Kurtz and Stockbridge \cite{KSt}.
\end{rmk}

\begin{assum}\label{A:Qmat} 
    Let $E$ be a complete and separable metric space. Let $A\subset\ol
    C(E)\times\ol C(E)$, $m\in\bN$, $Q\in\bR^{(m+1)\times (m+1)}$, and
    $\xi^{(1)},\ldots ,\xi^{(m)}\in\bR^{m+1}$.  
    \begin{enumerate}[(i)]
        \item   $A$ and $m$ satisfy Assumption \ref{A:eigmeas}.
        \item   $Q$ is the generator of a continuous-time Markov chain with
                state space $E_0=\{0,1,\ldots,m\}$ and eigenvalues $\{0,-\la_1,
                \ldots ,-\la_m\}$.
        \item   The vectors $\xi^{(1)},\ldots ,\xi^{(m)}$ are right eigenvectors
                of $Q$, corresponding to the eigenvalues $-\la_1,\ldots,
                -\la_m$.
        \item   For each $i\in \{0,1,\ldots ,m\}$, the function
                \begin{equation}\label{alphas}
                    \al_i(x) = 1 + \sum_{k=1}^m\xi_i^{(k)}\eta_k(x)
                \end{equation}
                satisfies $\al_i(x)>0$ for all $x\in E$. 
    \end{enumerate}
    We define $\xi^{(0)}=(1,\ldots,1)^T$, so that the function
    $\al:E\to\bR^{m+1}$ is given by $\al=\sum_{k=0}^m\xi^{(k)}\eta_k$.  
\end{assum}

\begin{rmk}\label{R:Qmat} 
    Given $(A,m,Q)$ satisfying (i) and (ii) of Assumption \ref{A:Qmat}, it is
    always possible to choose vectors $\xi^{(1)},\ldots ,\xi^{(m)}$ satisfying
    (iii) and (iv). This follows from the fact that each $\eta_k$ is a bounded
    function.
\end{rmk}

\begin{defn}\label{D:Bgen} 
    Suppose $(A,m,Q,\xi^{(0)},\ldots ,\xi^{(m)})$ satisfies Assumption 
    \ref{A:Qmat}. For $0\le j\ne i\le m$, define
    \begin{equation}\label{qdef}
        q_{ij}(x) = Q_{ij}\frac {\al_j(x)}{\al_i(x)}.
    \end{equation}
    Note that $q_{ij}\in C(E)$.  Let $S=E\times E_0$. Define $B\subset\ol C
    (S)\times C(S)$ by
    \begin{equation}\label{Bgen}
        Bf(x,i) = Af(x,i) + \sum_{j\ne i}q_{ij}(x)(f(x,j) - f(x,i)),
    \end{equation}
    where we take 
    \begin{equation}
        \cD(B) = \{f(x,i) = f_1(x)f_2(i): f_1 \in \cD(A), f_2 \in B(E_0)\}
            \label{domB}
    \end{equation}
    In particular, $Af(x,i)=f_2(i)Af_1(x)$. 

    For each $i\in E_0$, define the measure $\al(i,\cdot)$ on $E$ by
    \begin{equation}\label{almeas}
        \al(i,\Ga) = \int_{\Ga}\al_i(x)\vpi(dx),
    \end{equation}
    for all $\Ga\in\cB(E)$. Note that by \eqref{alphas}, \eqref{iii}, and 
    \eqref{ii}, these are probability measures.
\end{defn}

\subsection{Construction of the coupling}

We are now ready to construct our coupled pair, $(X,Y)$, which will have
generator $B$, to prove, for appropriate initial conditions, that the marginal
process $Y$ is a Markov chain with generator $Q$, and to establish our
conditional probability formulas. We first require two lemmas.

\begin{lemma}\label{L:small1}
    In the setting of Definition \ref{D:Bgen}, let $X$ be a cadlag solution of
    the martingale problem for $A$. Then there exists a cadlag process $Y$ such
    that $(X,Y)$ solves the (local) martingale problem for $B$. If $X$ is
    Markov, then $(X,Y)$ is Markov. If the martingale problem for $A$ is
    well-posed, then the martingale problem for $B$ is well-posed.
\end{lemma}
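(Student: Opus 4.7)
The plan is to construct $Y$ explicitly from the sample paths of $X$ using auxiliary Poisson noise, verify the martingale and Markov statements by direct computation, and deduce well-posedness from that of $A$ via a marginal-plus-conditioning argument. Concretely, given any initial joint distribution $\mu_0$ on $S=E\times E_0$, I would let $X$ be a cadlag solution of the $A$-martingale problem with the $E$-marginal of $\mu_0$, and enlarge the underlying probability space by an initial value $Y(0)$ drawn from the conditional of $\mu_0$ given $X(0)$, together with independent Poisson random measures $N_{ij}$ on $[0,\infty)^2$ with Lebesgue intensity, one per ordered pair $i\ne j$ in $E_0$. Because each $\eta_k$ is bounded continuous and $\al_i(x)>0$, the rates $q_{ij}$ are bounded continuous on $E$, so I can define $Y$ to be the unique cadlag process starting at $Y(0)$ that jumps from $i$ to $j$ at the first atom $(t,u)$ of $N_{ij}$ with $u\le q_{ij}(X(t))$ while $Y(t-)=i$; non-explosion on finite intervals is immediate from the boundedness of $\sum_j q_{ij}$.

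To check the $B$-martingale problem, take $f(x,i)=f_1(x)f_2(i)$ with $f_1\in\cD(A)$ and $f_2\in B(E_0)$. Almost surely $X$ and $Y$ share no jumps, since the jumps of $Y$ occur at times determined by independent Poisson atoms, so integration by parts (the quadratic covariation vanishes because $f_2(Y)$ is pure-jump and $X,Y$ have disjoint jump times) yields
\begin{equation*}
    f(X(t),Y(t))-f(X(0),Y(0))
    =\int_0^t f_2(Y(s-))\,df_1(X(s))
     +\int_0^t f_1(X(s))\,df_2(Y(s)).
\end{equation*}
The first integral equals $\int_0^t f_2(Y(s-))Af_1(X(s))\,ds$ plus the local martingale from $X$ solving the $A$-martingale problem, and the second equals $\int_0^t f_1(X(s))\sum_{j\ne Y(s)}q_{Y(s),j}(X(s))(f_2(j)-f_2(Y(s)))\,ds$ plus a local martingale, because by construction $Y$ has conditional jump intensity $q_{ij}(X(t))$ from $i$ to $j$ given the path of $X$. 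Summing, the compensator of $f(X,Y)$ is exactly $\int_0^t Bf(X(s),Y(s))\,ds$, as required.

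The Markov property when $X$ is Markov is then immediate: $(X(t+\cdot),Y(t+\cdot))$ is a measurable functional of $X(t)$, $Y(t)$, and the Poisson noise on $[t,\infty)$, which are conditionally independent of the past given $(X(t),Y(t))$. For well-posedness, suppose the $A$-problem is well-posed and let $(\tilde X,\tilde Y)$ solve the $B$-problem with a given initial law. Choosing $f_2\equiv 1$ in the martingale relation forces $\tilde X$ to solve the $A$-problem with the correct marginal initial law, so the law of $\tilde X$ is uniquely determined; then varying $f_1\in\cD(A)$ and $f_2\in B(E_0)$ identifies the conditional compensator of the jump measure of $\tilde Y$ given $\tilde X$ as the one generated by the rates $q_{ij}(\tilde X(\cdot))$, and a pure jump process on a finite state space is uniquely determined in law by its compensator. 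The main technical subtlety is this last uniqueness step: rigorously extracting the conditional compensator requires that $\cD(A)$ contain enough functions to make the products $f_1(x)q_{ij}(x)$ a measure-determining family on $E$, or equivalently one invokes a uniqueness result for jump processes coupled to an $A$-process of the kind standard in the Kurtz framework and underlying the Markov mapping theorem used later in the paper.
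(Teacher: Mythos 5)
Your construction follows the same route as the paper's: both build $Y$ from auxiliary Poisson noise independent of $X$ (the paper uses the random time-change representation $Y(t)=k+\sum_{i\ne j}(j-i)N_{ij}\bigl(\int_0^t 1_{\{i\}}(Y(s))q_{ij}(X(s))\,ds\bigr)$ with unit-rate Poisson processes, you use the equivalent thinning of Poisson random measures), then identify the compensator of $f(X,Y)$ to verify the $B$-(local) martingale property, and argue Markovianity and well-posedness from pathwise uniqueness of the jump construction, paralleling the paper's appeal to \cite{Ku81}.

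There is, however, a genuine error in one step. You assert that $\al_i(x)>0$ together with boundedness of the $\eta_k$ makes each $q_{ij}=Q_{ij}\al_j/\al_i$ bounded on $E$. That does not follow: strict positivity of a continuous $\al_i$ on a non-compact $E$ gives no positive lower bound on $\al_i$, so $1/\al_i$ can be unbounded. The remark that immediately follows this lemma in the paper explicitly warns that the $q_{ij}$ are \emph{not} assumed bounded, which is precisely why the conclusion is stated for the \emph{local} martingale problem. Your non-explosion claim (``immediate from the boundedness of $\sum_j q_{ij}$'') therefore rests on a false premise, and your compensator identification silently delivers a true martingale rather than the local martingale the lemma actually asserts. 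The fix is pathwise rather than global: a cadlag $X$-path has relatively compact range on every compact time interval, so $q_{ij}(X(\cdot))$ is locally bounded along almost every trajectory; this rules out explosion of $Y$ on finite horizons and explains why $f(X,Y)-\int_0^{\cdot}Bf(X,Y)\,ds$ is in general only a local martingale. With that correction, the rest of your integration-by-parts argument, the Markov-via-uniqueness step, and the marginal-plus-conditional-compensator sketch for well-posedness all go through along the same lines as the paper's proof.
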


\begin{rmk}
    We are not requiring the $q_{ij}$ to be bounded, so for the process we
    construct,
    \[
        f(X(t),Y(t)) - f(X(0),Y(0)) - \int_0^t Bf(X(s),Y(s))\,ds
    \]
    may only be a local martingale.
\end{rmk}

\begin{proof}[Proof of Lemma \ref{L:small1}]
    Let $X(t)$ be a cadlag solution to the martingale problem for $A$.  Let
    $\{N_{ij}:i,j\in E_0,i\neq j\}$ be a family of independent unit rate Poisson
    processes, which is independent of $X$. Then the equation
    \begin{equation}
        Y(t) = k + \sum_{i\neq j} (j - i) N_{ij} \left({
                \int_0^t 1_{\{i\}} (Y(s)) q_{ij} (X(s)) \,ds
            }\right)\label{yeq}
    \end{equation}
    has a unique solution, and as in \cite{Ku81}, the process $Z=(X,Y)$ is a
    solution of the (local) martingale problem for $B$. If $X$ is Markov, the
    uniqueness of the solution of \eqref{yeq} ensures that $(X,Y)$ is Markov.
    Similarly, $A$ well-posed implies $B$ is well posed.
\end{proof}

\begin{lemma}
    Let $A$ satisfy Assumption \ref{A:eigmeas}. Taking $\psi (x,i)=1+\sum_{j\ne
    i}q_{ij}(x)\ge 1$, if $A$ satisfies Condition \ref{mapcond}, then $B$
    satisfies Condition \ref{mapcond} with $E$ replaced by $S=E\times E_0$.
\end{lemma}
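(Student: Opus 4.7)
My plan is first to identify what Condition~\ref{mapcond} asks of the pair $(B,\psi)$ and then to verify each piece for $B$ by reducing it to the same piece for $A$ plus a direct estimate for the jump part. Condition~\ref{mapcond} is the dominating-function hypothesis of the Markov mapping theorem: it asks for a measurable $\psi\ge 1$ together with a control of the form $|Bf|\le c_f\psi$ for $f$ in a suitable core, plus a compatibility clause phrased through a martingale problem / time-change that controls $\int_0^t\psi\,ds$ along a solution. The first step is to check the preliminary requirements on $\psi$. Under Assumption~\ref{A:Qmat}, $q_{ij}(x)=Q_{ij}\alpha_j(x)/\alpha_i(x)\ge 0$, because the off-diagonal entries of $Q$ are nonnegative and the $\alpha_i$ are strictly positive continuous functions; hence $\psi(x,i)\ge 1$ and $\psi$ is continuous on $S=E\times E_0$.

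The heart of the proof is the dominating bound, and it comes from a direct computation on a product function $f(x,i)=f_1(x)f_2(i)\in\cD(B)$. By \eqref{Bgen},
\[
  Bf(x,i)=f_2(i)\,Af_1(x)+f_1(x)\sum_{j\ne i}q_{ij}(x)\bigl(f_2(j)-f_2(i)\bigr),
\]
and the jump term is bounded in absolute value by $2\|f_1\|_\infty\|f_2\|_\infty(\psi(x,i)-1)\le 2\|f\|_\infty\psi(x,i)$. For the $A$-term, the hypothesis that $A$ satisfies Condition~\ref{mapcond} provides a bound $|Af_1(x)|\le c_{f_1}\psi_A(x)$ on the $E$-marginal. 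If the condition is stated with the same $\psi$ throughout, one uses $\psi(x,i)\ge 1$ directly; if $\psi_A$ is a separate dominating function, one replaces $\psi$ by $\psi+\psi_A$, a substitution that preserves the form \eqref{domB} and does not affect anything else in the argument. Either way, one obtains $|Bf(x,i)|\le C_f\,\psi(x,i)$ with a constant depending only on $f$, and linearity then propagates the bound from the product generators of $\cD(B)$ to all of $\cD(B)$.

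The last ingredient is the compatibility clause. This is essentially what Lemma~\ref{L:small1} is built to deliver: starting from a cadlag solution $X$ of the martingale problem for $A$ (for which $\int_0^t\psi_A(X(s))\,ds$ is already finite by Condition~\ref{mapcond} for $A$), the Poisson construction \eqref{yeq} produces a cadlag $(X,Y)$ solving the local martingale problem for $B$. Along this path, the instantaneous jump rate of $Y$ at time $s$ equals $\psi(X(s),Y(s))-1$, so a standard Poisson-thinning argument against $Y$'s jump counts transfers the $A$-side integrability to a $B$-side integrability of $\int_0^t\psi(X(s),Y(s))\,ds$; the finiteness of $E_0$ makes the $i$-variable contribute nothing of substance. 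The main obstacle I expect is precisely the bookkeeping in the previous paragraph: reconciling the specific $\psi$ prescribed in the statement with whichever normalization of the dominating function is baked into Condition~\ref{mapcond} for $A$, and making sure the resulting joint $\psi$ still equals $1+\sum_{j\ne i}q_{ij}$ rather than something strictly larger. All the other verifications are routine once the positivity and continuity of the $q_{ij}$, supplied by Assumption~\ref{A:Qmat}, are in hand.
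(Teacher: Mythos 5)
Your overall plan---verify each clause of Condition~\ref{mapcond} for $B$ by reducing it to the corresponding clause for $A$ plus a direct estimate on the jump part---is the right one, but the execution has two gaps. The first is in the dominating bound (clause~(ii)). The lemma fixes $\psi(x,i)=1+\sum_{j\ne i}q_{ij}(x)$, so you are not free to "replace $\psi$ by $\psi+\psi_A$"; that would prove existence of \emph{some} dominating function, not that \emph{this} $\psi$ works. You hedge by saying that "if the condition is stated with the same $\psi$ throughout, one uses $\psi\ge 1$ directly," but you never establish why the $A$-part could be controlled by a constant. The missing observation is simply that Assumption~\ref{A:eigmeas}(i) gives $A\subset\ol C(E)\times\ol C(E)$, so $Af_1\in\ol C(E)$ is already a \emph{bounded} function; hence $|f_2(i)Af_1(x)|\le\|f_2\|_\infty\|Af_1\|_\infty\,\psi(x,i)$ because $\psi\ge 1$, and together with your (correct) bound $2\|f_1\|_\infty\|f_2\|_\infty(\psi-1)$ on the jump term this gives $|Bf|\le c_f\psi$ with the prescribed $\psi$. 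This is exactly what the paper does, and it makes the entire $\psi_A$ detour unnecessary. (Note also that $\cD(B)$ in \eqref{domB} \emph{consists} of products $f_1f_2$; there is no further passage by linearity.)

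The second gap is your closing paragraph, which addresses a "compatibility clause" that is not part of Condition~\ref{mapcond}. The condition has four clauses: (i) $\cD(B)$ closed under multiplication and separating; (ii) the dominating bound; (iii) existence of a countable $B_c\subset B$ whose local martingale problem determines that of $B$; and (iv) $\psi^{-1}B$ is a pre-generator. The integrability requirement $\int_0^t E[\wt\psi(\wt Y(s))]\,ds<\infty$ that you invoke, together with the Poisson construction, belongs to the hypotheses of Theorem~\ref{T:MMT}(a) and to Lemma~\ref{L:small1}, not to Condition~\ref{mapcond}. Your proof therefore leaves (iii) and (iv) unverified. The paper handles (iii) by combining the countable subset $A_c$ for $A$ with the separability of $B(E_0)$, and (iv) by noting that $B$ is a perturbation of the pre-generator $A$ by a jump operator on the finite set $E_0$ (so for each $(x,i)$ one can solve the martingale problem for $(B,\de_{(x,i)})$, precisely as in the construction of Lemma~\ref{L:small1}). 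These are short arguments, but they are part of what the lemma asserts and need to be made explicitly.
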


\begin{proof}
    Since $\cD(A)$ is closed under multiplication, $\cD(B)$ defined in
    \eqref{domB} is closed under multiplication.

    Since we are assuming that $\cR(A)\subset\bar {C}(E)$, for each $f\in
    \cD(B)$, there exists $c_f>0$ such that $|Bf(x,i)|\le c_f\psi (x)$.

    Condition \ref{mapcond}(iii) for $A$ and the separability of $B(E_0)$
    implies Condition \ref{mapcond}(iii) for $B_0$.

    Since $A$ is a pre-generator and $B$ is a perturbation of $A$ by a jump
    operator, $B_0$ is a pre-generator.
\end{proof}

\begin{thm}\label{T:main} 
    Suppose $A$ satisfies Condition \ref{mapcond} and $(A,m,Q,\xi^{(1)},\ldots
    ,\xi^{(m)})$ satisfies Assumption \ref{A:Qmat}. Let $B$ be given by
    \eqref{Bgen} and for $p_i\ge 0$, $\sum_{i=0}^mp_i=1$, define
    \[
        \nu(\Ga \times \{i\}) = p_i\al(i,\Ga), \quad \Ga \in \cB(E), i \in E_0.
    \]
    If $\wt Y$ is a cadlag $E_0$-valued Markov chain with generator $Q$ and
    initial distribution $\{p_i\}$, then there exists a solution $(X,Y)$ of the
    martingale problem for $(B,\nu)$ such that $Y$ and $\wt Y$ have the same
    distribution on $D_{E_0}[0,\infty)$, and
    \begin{equation}\label{main}
        P(X(t) \in \Ga \mid \cF_t^Y) = \al(Y(t),\Ga),
    \end{equation}
    for all $t\ge 0$ and all $\Ga\in\cB(E)$.
\end{thm}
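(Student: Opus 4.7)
The plan is to invoke the Markov mapping theorem (Theorem \ref{T:MMT}) with $\gamma:S\to E_0$ the projection onto the second coordinate, and with the transition kernel from $E_0$ back into $S$ given by $i\mapsto\al(i,\cdot)$, viewed as a probability measure on $S$ supported on $E\times\{i\}$ (this is a probability measure by the computation immediately after \eqref{almeas}). Under this setup, the image of $\nu$ under $\gamma$ is exactly $\{p_i\}$, and the theorem's output conditional distribution statement is precisely \eqref{main}. The hypotheses on $B$ needed for the Markov mapping theorem have been arranged in the preceding lemma, so the remaining task is to verify the intertwining identity
\begin{equation*}
  \int_S Bf(s)\,\al(i,ds) = Qh(i),\qquad h(i):=\int_S f(s)\,\al(i,ds),
\end{equation*}
for every $f\in\cD(B)$.

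By \eqref{domB} and bilinearity it suffices to check this for $f(x,i')=f_1(x)f_2(i')$. Splitting $Bf$ according to \eqref{Bgen} produces two integrals over $E$ against $\al_i\,d\vpi$. For the $A$-part, expanding $\al_i=\sum_{k=0}^m\xi_i^{(k)}\eta_k$ and using \eqref{i}--\eqref{iii} together with the stationarity of $\vpi$ gives
\begin{equation*}
  f_2(i)\int_E Af_1\cdot\al_i\,d\vpi = -f_2(i)\sum_{k=1}^m\la_k\xi_i^{(k)}c_k,
\end{equation*}
where $c_k:=\int_E f_1\eta_k\,d\vpi$ and the $k=0$ contribution drops out because $\int Af_1\,d\vpi=0$. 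For the jump part, the key algebraic identity $q_{ij}(x)\al_i(x)=Q_{ij}\al_j(x)$ baked into the definition \eqref{qdef} clears the state-dependent ratio and yields $\sum_{j\ne i}Q_{ij}(f_2(j)-f_2(i))(c_0+\sum_k\xi_j^{(k)}c_k)$.

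Combining the two contributions and appealing to the right-eigenvector identities $Q\xi^{(k)}=-\la_k\xi^{(k)}$ for $k\ge 1$, together with $Q\xi^{(0)}=0$, a short rearrangement collapses the sum to $\sum_jQ_{ij}f_2(j)(c_0+\sum_k\xi_j^{(k)}c_k)=Qh(i)$, since $h(i)=f_2(i)(c_0+\sum_k\xi_i^{(k)}c_k)$ by construction of $\al(i,\cdot)$. This is the computational heart of the argument and the main obstacle: one must show that the eigenmeasure structure of $A$ on $E$ meshes exactly with the right-eigenvector structure of $Q$ on $E_0$, and the definition \eqref{qdef} of $q_{ij}$ is precisely the bridge that makes this mesh possible. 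Once the intertwining identity is in hand, the Markov mapping theorem simultaneously produces the solution $(X,Y)$ of the martingale problem for $(B,\nu)$, identifies $Y$ in law with $\wt Y$ on $D_{E_0}[0,\infty)$, and delivers the conditional distribution formula \eqref{main}.
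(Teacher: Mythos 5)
Your proposal matches the paper's proof essentially step for step: same choice of $\gamma$ and transition kernel $\wt\al(i,\cdot)=\al(i,\cdot)\otimes\de_i$, same reduction to verifying the intertwining identity, and the same algebra using $q_{ij}\al_i=Q_{ij}\al_j$ together with $Q\xi^{(k)}=-\la_k\xi^{(k)}$; organizing the computation around $c_k=\int f_1\eta_k\,d\vpi$ for product-form $f=f_1f_2$ is a cosmetic rearrangement of the paper's computation with $\ol f$. The only small omission is the explicit check that $\wt\psi(i)=\int_S\psi(z)\,\wt\al(i,dz)=1+\sum_{j\ne i}Q_{ij}<\infty$, which is a hypothesis of Theorem \ref{T:MMT} on $\wt\al$ rather than on $B$ and so is not covered by the preceding lemma, though it is immediate.
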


\begin{proof}
    We apply Theorem \ref{T:MMT} to the operator $B\subset\ol C(S)\times C(S)$.  

    Let $\ga:S\to E_0$ be the coordinate projection. Let $\wt\al$ be the
    transition function from $E_0$ into $S$ given by the product measure
    $\wt\al(i,\cdot)=\al(i,\cdot)\otimes\de_i^{E_0}$, where $\al(i,\cdot)$ is
    given by \eqref{almeas}. Then $\wt\al(i,\ga^{-1}(i))=1$ and 
    \[
        \wt\psi(i) \equiv \int_S\psi (z)\wt\al(i,dz)
            = \int_E\psi (x,i)\al_i(x)\vpi(dx)
            = 1 + \sum_{j\ne i}Q_{ij} < \infty ,
    \]
    for each $i\in E_0$. Define
    \[
        C = \left\{
                \left(
                    \int_S f(z)\wt\al(\cdot,dz), \int_SBf(z)\wt\al(\cdot,dz)
                \right): f \in \cD(B)
            \right\} \subset \bR^{m+1} \times \bR^{m+1}.
    \]
    The result follow by Theorem \ref{T:MMT}, if we can show that $Cv=Qv$ for
    every  vector $v\in\cD(C)$. Given $f\in\cD(B)$, let
    \[
        \ol f(i) = \int_S f(z)\wt\al(i,dz)
            = \int_E f(x,i)\al(i,dx)
            = \int_E f(x,i)\al_i(x)\vpi(dx).
    \]
    Note that
    \[
        C\ol f(i) = \int_E Bf(x,i)\al_i(x)\vpi(dx).
    \]
    Since $\la_0=0$, by \eqref{i} and the definition of $q_{ij}(x)$,
    \[
        C\ol f(i) = -\sum_{k=0}^m \xi_i^{(k)}\la_k\int_E f(x,i)\eta_k(dx)
            +\sum_{j\ne i} Q_{ij}\int_E \al_j(x)(f(x,j) - f(x,i))\vpi(dx).
    \]
    By assumption $Q\xi^{(k)}=-\la_k\xi^{(k)}$, so
    $-\xi_i^{(k)}\la_k=\sum_{j=0}^mQ_{ij}\xi_j^{(k)}$ and
    \begin{align*}
        -\sum_{k=0}^m \xi_i^{(k)}\la_k\int_E f(x,i)\eta_k(dx)
            &= \sum_{k=0}^m \sum_{j=0}^m Q_{ij}
            \xi_j^{(k)}\int_E f(x,i)\eta_k(dx)\\
        &= \sum_{j=0}^m Q_{ij}\sum_{k=0}^m \xi_j^{(k)}\int_E f(x,i)\eta_k(dx)\\
        &= \sum_{j=0}^m Q_{ij}\int_E f(x,i)\al_j(x)\vpi(dx).
    \end{align*}
    This gives
    \[
        C\ol f(i) = Q_{ii}\int_E f(x,i)\al_i(x)\vpi(dx)
            + \sum_{j\ne i} Q_{ij}\int_S f(x,j)\al_j(x)\vpi(dx)
            = \sum_{j=0}^m Q_{ij}\ol f(j) = Q\ol f(i).
    \]
    It follows that $\wt Y$ is a solution to the martingale problem for $(C,p)$.  

    By Theorem \ref{T:MMT}(a), there exists a solution $Z=(X,Y)$ of the
    martingale problem for $(B,\nu)$ such that $Y=\ga(Z)$ and $\wt Y$ have the
    same distribution on $D_{E_0}[0,\infty)$. Theorem \ref{T:MMT}(b) implies
    \eqref{main}.  
\end{proof}

\begin{rmk}
    In what follows, we may still write expectations with the notation $E^x$ or
    $E^i$, even when we have a coupled process, $(X,Y)$. The meaning will be
    determined by context, depending on whether the integrand of the expectation
    involves only $X$ or only $Y$.  
\end{rmk}

\section{Reversible diffusions}\label{S:coupRd}

\subsection{Assumptions on the potential function}\label{S:assumpF}

We now consider the special case of our coupling when $X$ is a reversible
diffusion on $\bR^d$ driven by a potential function $F$ and a small white noise
perturbation. We will need to use several results from the literature about the
eigenvalues and eigenfunctions of the generator of $X$. We assume the following
on $F$.

\begin{assum}\label{asmF}
    \begin{enumerate}[(i)]
        \item   $F\in C^3(\bR^d)$ and $\lim_{|x|\rightarrow\infty}F(x)=\infty$.
        \item   $F$ has $m+1\ge 2$ local minima ${\cal M}=\{x_0,\ldots,x_m\}$.
        \item   There exist constants $a_i>0$ and $c_i>0$ such that
                $a_2<2a_1-2$, and  
                \begin{align}
                    c_1|x|^{a_1} - c_2 &\le |\nab F(x)|^2 \le c_3|x|^{a_2}+c_4,
                        \label{superquad1}\\
                    c_1|x|^{a_1} - c_2
                        &\le (|\nab F(x)| - 2\De F(x))^2 \le c_3|x|^{a_2}+c_4.
                        \label{superquad2}
                \end{align}
    \end{enumerate}
\end{assum}

\begin{rmk}
Note that $2<a_1\le a_2$. To see this, observe that \eqref{superquad1} implies
$a_1\le a_2$. Thus, $a_1\le a_2<2a_1-2$, which implies $a_1>2$.
\end{rmk}

\begin{lemma}\label{L:superquad3} 
    Under Assumption \ref{asmF}, there exist constants $\wt c_i>0$ such that 
    \begin{equation}\label{superquad3}
        \wt c_1|x|^{\wt a_1} - \wt c_2 \le |F(x)|
            \le \wt c_3|x|^{\wt a_2} + \wt c_4,
    \end{equation}
    where $\wt a_i = a_i/2+1$.
\end{lemma}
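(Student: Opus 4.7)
The plan is to handle the two inequalities separately. For the upper bound I would use the straight-line integral
\[
F(x) - F(0) = \int_0^1 \nab F(tx) \cdot x\,dt,
\]
and estimate $|\nab F(tx)| \le \sqrt{c_3}\,(t|x|)^{a_2/2} + \sqrt{c_4}$ via \eqref{superquad1} together with the elementary $\sqrt{a+b} \le \sqrt{a} + \sqrt{b}$. Integrating then gives $|F(x) - F(0)| \le (\sqrt{c_3}/\wt a_2)\,|x|^{\wt a_2} + \sqrt{c_4}\,|x|$, and the stated upper bound follows after absorbing the lower-order $|x|$ term into the additive constant.

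For the lower bound, since $F \to \infty$ at infinity, $F$ is bounded below and $|F(x)| = F(x)$ outside some compact set, so it suffices to prove $F(x) \ge \wt c_1|x|^{\wt a_1} - \wt c_2$ for $|x|$ large, the bounded case being absorbed into the additive constant. The lower bound in \eqref{superquad1} forces the critical set of $F$ to lie in $\{|y| \le (c_2/c_1)^{1/a_1}\}$, so fixing $M_0$ larger than $\sup_{\nab F(y) = 0} F(y)$ makes the sublevel set $K = \{F \le M_0 + 1\}$ compact with $\nab F \ne 0$ on its boundary. For $|x|$ so large that $F(x) > M_0 + 1$, the gradient flow $\varphi_x$ enters $K$ at a finite time $T$, and the energy identity
\[
F(x) - F(\varphi_x(T)) = \int_0^T |\nab F(\varphi_x(t))|^2\,dt
\]
is the starting point. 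Reparametrizing by arc length via $\psi(s) = \varphi_x(t(s))$ with $s(t) = \int_0^t |\nab F(\varphi_x(u))|\,du$ converts the right-hand side into $\int_0^L |\nab F(\psi(s))|\,ds$, where $\psi$ is unit-speed on $[0,L]$ with $\psi(0) = x$, and $L < \infty$ because $\varphi_x$ remains in the compact set $\{F \le F(x)\}$ on which $|\nab F|$ is bounded. Using $|\nab F(y)| \ge \sqrt{c_1/2}\,|y|^{a_1/2}$ for $|y| \ge R_0$ (from \eqref{superquad1}) then reduces everything to lower-bounding $\int_0^L |\psi(s)|^{a_1/2}\,1_{\{|\psi(s)| \ge R_0\}}\,ds$ by a constant multiple of $|x|^{\wt a_1}$.

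The main obstacle I anticipate is that $|\varphi_x(t)|$ need not be monotone in $t$, so a naive pointwise bound inside the time integral would give only a linear-in-$|x|$ estimate rather than the correct $|x|^{\wt a_1}$. I would handle this via a layer-cake decomposition: since $\psi$ is unit-speed with $\psi(0) = x$, the arc-length-versus-Euclidean-distance inequality gives $|x| - |\psi(s)| \le |\psi(s) - x| \le s$, which, after enlarging $R_0$ if necessary to exceed $\sup_{y \in K}|y|$, implies
\[
|\{s \in [0, L] : |\psi(s)| > t\}| \ge (|x| - t)_+
\]
for every $t \in [R_0, |x|]$. Feeding this estimate into the layer-cake formula for $\int_0^L |\psi(s)|^{a_1/2}\,1_{\{|\psi(s)| \ge R_0\}}\,ds$ and computing the resulting one-dimensional integral yields a leading term of order $|x|^{\wt a_1}/\wt a_1$ with a lower-order correction of order $|x|$ for $|x|$ large, and combined with boundedness of $F$ on $K$ this produces $F(x) \ge \wt c_1|x|^{\wt a_1} - \wt c_2$ as claimed.
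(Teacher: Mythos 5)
Your upper-bound argument is essentially the paper's: integrate $\nabla F$ along the segment from $0$ to $x$ and apply the right-hand inequality of \eqref{superquad1}. For the lower bound you take a genuinely different route. The paper follows the gradient flow $\varphi_x$ only until the first time $T''$ it leaves the ball $B_{|x|/2}$; on $[0,T'']$ the pointwise bound $|\nabla F(\varphi_x(t))|\ge C'(|x|/2)^{a_1/2}$ holds uniformly, and the arc length over $[0,T'']$ is at least $|x|/2$, so the energy drop is at least $C'(|x|/2)^{a_1/2}\cdot|x|/2\sim|x|^{a_1/2+1}$. You instead run the flow all the way down to a fixed sublevel set $K$, reparametrize by arc length, and use a layer-cake decomposition together with the unit-speed bound $|\psi(s)|\ge|x|-s$ to estimate $\int_0^L|\nabla F(\psi(s))|\,ds$. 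Both approaches address the same core difficulty (that $|\varphi_x(t)|$ need not be monotone) and produce the same exponent $\wt a_1=a_1/2+1$; the paper's stopping trick is more elementary and shorter, while your layer-cake computation yields a cleaner leading constant $1/\wt a_1$ (immaterial for the statement, which only fixes the exponent).

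One step is asserted but not justified: that the gradient flow $\varphi_x$ enters $K$ at a \emph{finite} time $T$. The paper proves the analogous fact for the ball $B_R$ explicitly, by noting that if the flow never entered, the energy identity would give $F(\varphi_x(t))\le F(x)-t\to-\infty$, contradicting that $F$ is bounded below. You use $T<\infty$ implicitly both in defining the arc length $L$ and in justifying $L<\infty$ (``$\varphi_x$ remains in the compact set $\{F\le F(x)\}$ on which $|\nabla F|$ is bounded'' only gives $L<\infty$ once $T<\infty$ is known). This is easily filled by the paper's argument, or by a LaSalle-type compactness argument using the fact that all critical values lie below $M_0$, but as written it is a gap.
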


\begin{proof}
    Since  
    \[
        F(x) = F(0) + \int_0^1\nab F(sx)\cdot x\,ds,
    \]
    it follows from \eqref{superquad1} that
    \[
        |F(x)| \le |F(0)| + |x|(c_3|x|^{a_2} + c_4)^{1/2},
    \]
    and the upper bound in \eqref{superquad3} follows immediately.

    Since $F\to\infty$, there exists $C>0$ such that $F(x)>-C$ for all
    $x\in\bR^d$, and since $|\nab F|\to\infty$, there exists $R>0$ such that
    $|\nab F(x)|\ge 1$ whenever $|x|\ge R$.  

    Recall that $\ph_x$ satisfies $\ph'_x=-\nab F(\ph_x)$ and $\ph_x(0)=x$, and
    define
    \[
        T_x = \inf\{t \ge 0: |\ph_x(t)| < R\}.
    \]
    Suppose there exists $x$ such that $T_x=\infty$. Then, for all $t>0$,
    \begin{align*}
        -C < F(\ph_x(t)) &= F(x) + \int_0^t \nab F(\ph_x(s))\cdot\ph_x'(s)\,ds\\
        &= F(x) - \int_0^t |\nab F(\ph_x(s))|^2\,ds\\
        &\le F(x) - t.
    \end{align*}
    Therefore, $F(x)\ge t-C$ for all $t$, a contradiction, and we must have
    $T_x<\infty$ for all $x\in\bR^d$.  

    Let $L=\sup_{|x|\le R}F(x)$. By \eqref{superquad1} and the fact that
    $F\to\infty$, we may choose $R'\ge R$ and $C'>0$ such that $F(x)>L$ and
    $|\nab F(x)|\ge C'|x|^{a_1/2}$ whenever $|x|>R'$.  

    Fix $x\in\bR^d$ with $|x|>2R'$, so that $F(x)>L$. Since $|\ph_x(T_x)|=R$, it
    follows that $F(\ph_x(T_x))\le L$. By the continuity of $\ph_x$, we may
    choose $T'\in (0,T_x]$ such that $F(\ph_x(T'))=L$. We then have
    \begin{align*}
        L &= F(x) + \int_0^{T'} \nab F(\ph_x(t))\cdot\ph_x'(t)\,dt\\
        &= F(x) - \int_0^{T'}|\nab F(\ph_x(t))||\ph_x'(t)|\,dt.
    \end{align*}
    Let $T''=\inf\{t\ge 0:|\ph_x(t)|<|x|/2\}$. Note that $F(\ph_x(T'))=L$
    implies $|\ph_x(T')|\le R'<|x|/2$, and therefore $T''\le T'$. Moreover, for
    all $t<T''$, we have $|\ph_x(t)|\ge |x|/2>R'$, which implies
    \[
        |\nab F(\ph_x(t))| \ge C'|\ph_x(t)|^{a_1/2}
            \ge C'\left({\frac {|x|}2}\right)^{a_1/2}.
    \]
    Thus,
    \[
        L \le F(x)
            - C'\left({\frac {|x|}2}\right)^{a_1/2}\int_0^{T''} |\ph_x'(t)|\,dt.
    \]
    But $\int_0^{T''} |\ph_x'(t)|\,dt$ is the length of $\ph_x$ from $t=0$ to
    $t=T''$, which is bounded below by
    \[
        |\ph_x(T'') - \ph_x(0)| \ge |\ph_x(0)| - |\ph_x(T'')|
            = |x| - \frac {|x|}2
            = \frac {|x|}2.
    \]
    Therefore, for all $|x|>2R'$, we have $F(x)\ge C''|x|^{a_1/2+1}-|L|$, where
    $C''=2^{-a_1/2-1}C'$, and this proves the lower bound in \eqref{superquad3}.
\end{proof}

\subsection{Spectral properties of the generator}

Having established our assumptions on $F$, we now turn our attention to the
diffusion process, $X_\ep$, given by \eqref{SDE}. To simplify notation, we may
sometimes omit the $\ep$. The process $X$ has generator $A=\ep\De-\nab
F\cdot\nab$. To show that $A$ meets the requirements of our coupling from
Section \ref{S:gencoup}, we must prove certain results about its eigenvalues and
eigenfunctions. For this, we begin with some notation, a lemma, and two results
from the literature.

Define $\pi(x)=\pi_\ep(x)=e^{-F(x)/2\ep}$. Let
\begin{equation}
    V = V_\ep := \frac {\De\pi}{\pi}
        = \frac 1{4\ep^2}|\nab F|^2 - \frac 1{2\ep}\De F.\label{pot}
\end{equation}

\begin{lemma}\label{L:Vepquad} 
    Let $V_\ep$ be given by \eqref{pot}, where $F$ satisfies Assumption
    \ref{asmF}. Recall the constants $a_i$ from
    \eqref{superquad1}-\eqref{superquad2}. For all $\ep\in (0,1)$, there exist
    constants $c_{i,\ep}>0$ such that
    \[
        c_{1,\ep}|x|^{a_1} - c_{2,\ep} \le V_\ep(x)
            \le c_{3,\ep}|x|^{a_2} + c_{4,\ep}.
    \]
    In particular, $V_\ep\to\infty$ for all $\ep\in(0,1)$.
\end{lemma}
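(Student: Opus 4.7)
The plan is to work directly from the representation $V_\ep = \frac{1}{4\ep^2}|\nab F|^2 - \frac{1}{2\ep}\De F$ given in \eqref{pot} and to bound $|\nab F|^2$ and $|\De F|$ separately using Assumption \ref{asmF}(iii). The growth of $|\nab F|^2$ is directly controlled by \eqref{superquad1}, while \eqref{superquad2} bounds $(|\nab F|-2\De F)^2$. Combining them via the triangle inequality
\[
|2\De F(x)| \le |\nab F(x)| + \bigl||\nab F(x)| - 2\De F(x)\bigr| \le 2\sqrt{c_3|x|^{a_2}+c_4},
\]
I obtain the pointwise estimate $|\De F(x)|\le C(|x|^{a_2/2}+1)$ for a constant $C$ depending only on $c_3$ and $c_4$.

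The upper bound on $V_\ep$ then follows immediately from
\[
V_\ep(x) \le \frac{1}{4\ep^2}|\nab F(x)|^2 + \frac{1}{2\ep}|\De F(x)|,
\]
together with the upper estimate in \eqref{superquad1} and the $|\De F|$ bound just derived. Since $a_2>2$, we have $a_2/2<a_2$, so $|x|^{a_2/2}$ is dominated by $|x|^{a_2}$ on $\{|x|\ge 1\}$ and is bounded on $\{|x|\le 1\}$; thus the whole expression is bounded by $c_{3,\ep}|x|^{a_2}+c_{4,\ep}$ after an absorption of constants.

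For the lower bound, I would write
\[
V_\ep(x) \ge \frac{1}{4\ep^2}|\nab F(x)|^2 - \frac{1}{2\ep}|\De F(x)| \ge \frac{c_1}{4\ep^2}|x|^{a_1} - \frac{c_2}{4\ep^2} - \frac{C}{2\ep}(|x|^{a_2/2}+1).
\]
The structural condition $a_2<2a_1-2$ in Assumption \ref{asmF}(iii) forces $a_2/2<a_1-1<a_1$, so the subtracted polynomial is of strictly smaller order than $|x|^{a_1}$; hence for $|x|$ larger than some threshold $R_\ep$ (depending only on $\ep$ and the constants in \eqref{superquad1}--\eqref{superquad2}), one has $V_\ep(x)\ge \frac{c_1}{8\ep^2}|x|^{a_1}$. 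On the compact set $\{|x|\le R_\ep\}$, $V_\ep$ is bounded by continuity (ensured by $F\in C^3$), so after enlarging $c_{2,\ep}$ the lower bound $c_{1,\ep}|x|^{a_1}-c_{2,\ep}$ holds everywhere. The divergence $V_\ep\to\infty$ is then immediate from the lower bound. The only step requiring real thought is the initial extraction of the $|x|^{a_2/2}$ bound on $|\De F|$ from \eqref{superquad2}, which is precisely the role that assumption is designed to play; everything else is polynomial bookkeeping.
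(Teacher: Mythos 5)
Your proof is correct and uses essentially the same ingredients as the paper's: extract from \eqref{superquad2} and \eqref{superquad1} a bound $|\De F(x)| \le C(|x|^{a_2/2}+1)$, and then exploit $a_2/2 < a_1$ (a consequence of the structural hypothesis $a_2 < 2a_1-2$) to see that the $\De F$ contribution is subcritical for the lower bound. The only cosmetic difference is bookkeeping: you bound $|\De F|$ explicitly via the triangle inequality and work directly with $V_\ep$, whereas the paper first establishes two-sided polynomial bounds for $V_1$ using the algebraic rewriting $4V_1 = (|\nab F|-2\De F)+(|\nab F|^2-|\nab F|)$ and then transfers to $V_\ep$ via the identity $V_\ep = \frac1\ep\bigl(V_1 + \frac{1-\ep}{4\ep}|\nab F|^2\bigr)$. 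Your handling of the compact set $\{|x|\le R_\ep\}$ by continuity of $V_\ep$ (since $F\in C^3$) and absorbing into $c_{2,\ep}$ is fine and matches the implicit ``for $x$ sufficiently large, then patch constants'' step in the paper.
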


\begin{proof}
    Fix $\ep\in(0,1)$. By \eqref{superquad1} and \eqref{superquad2}, for $x$
    sufficiently large,
    \[
        c|x|^{a_1} \le (|\nab F(x)| - 2\De F)^2 \le C|x|^{a_2},
    \]
    and
    \[
        c|x|^{a_1} \le |\nab F(x)|^2 \le C|x|^{a_2},
    \]
    for some $0<c\le C<\infty$. Note that
    \[
        4V_1 = |\nab F|^2 - 2\De F
            = (|\nab F| - 2\De F) + (|\nab F|^2 - |\nab F|).
    \]
    Hence, for $x$ sufficiently large, $V_1(x)\le C_1|x|^{a_2}$. Also,
    \[
        V_1(x) \ge \frac 14(c|x|^{a_1} - C_2|x|^{a_2/2}).
    \]
    Since $a_1>a_2/2$, it follows that for $x$ sufficiently large, $V_1(x)\ge\wt
    c|x|^{a_1}$. Therefore, there exist constants $\wt c_i>0$ such that
    \[
        \wt c_1|x|^{a_1} - \wt c_2\le V_1(x) \le \wt c_3|x|^{a_2} + \wt c_4,
    \]
    and
    \[
        \wt c_1|x|^{a_1} - \wt c_2 \le |\nab F(x)|^2
            \le \wt c_3|x|^{a_2} + \wt c_4,
    \]
    for all $x\in\bR^d$. Note that
    \[
        V_\ep = \frac 1{\ep}\left({
                V_1 + \left({\frac {1-\ep}{4\ep}}\right)|\nab F|^2
            }\right),
    \]
    so that
    \[
        \frac 1{\ep}V_1 \le V_\ep
            \le \frac 1{\ep}V_1 + \frac 1{\ep^2}|\nab F|^2.
    \]
    From here, the lemma follows easily.
\end{proof}

The following two theorems are from \cite{Da}. Theorem \ref{T:Davies1} is a
consequence of \cite[Theorem 4.5.4]{Da} and \cite[Lemma 4.2.2]{Da}. Theorem
\ref{T:Davies2} is part of \cite[Theorem 2.1.4]{Da}.

\begin{thm}\label{T:Davies1} 
    Let $H=-\De+W$, where $W$ is continuous with $W\to\infty$. Let $\la$ denote
    the smallest eigenvalue of $H$, and $\psi$ the corresponding eigenfunction,
    normalized so that $\|\psi\|_{L^2(\bR^d)}=1$.  Define $Uf=\psi f$ and $\wt
    H=U^{-1}(H-\la)U$. If
    \[
        \wh c_1|x|^{\wh a_1} - \wh c_2 \le |W(x)|
            \le \wh c_3|x|^{\wh a_2} + \wh c_4,
    \]
    where $\wh a_i>0$, $\wh c_i>0$, and $\wh a_2<2\wh a_1-2$, then $e^{-\wt Ht}$
    is an ultracontractive symmetric Markov semigroup on
    $L^2(\bR^d,\psi(x)^2\,dx)$.  That is, for each $t\ge 0$, the operator
    $e^{-\wt Ht}$ is a bounded operator mapping $L^2(\bR^d,\psi(x)^2\,dx)$ to
    $L^\infty(\bR^d,\psi(x)^2\,dx)$.
\end{thm}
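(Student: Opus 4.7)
The plan is to recognize this statement as a direct assembly of the two results cited from Davies's book. First I would verify that $H = -\De + W$ sits in the standard Schr\"odinger framework: continuity of $W$ together with $W \to \infty$ implies $H$ has compact resolvent on $L^2(\bR^d)$ and hence purely discrete spectrum bounded below, with smallest eigenvalue $\la$ simple and ground state $\psi$ strictly positive (the Perron--Frobenius property for Schr\"odinger operators with a confining potential). After normalizing $\|\psi\|_{L^2} = 1$, the multiplication operator $Uf = \psi f$ is a unitary isomorphism from $L^2(\bR^d, \psi(x)^2\,dx)$ onto $L^2(\bR^d, dx)$, so $\wt H = U^{-1}(H - \la)U$ is a well-defined nonnegative symmetric operator on the weighted space.

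Next, I would invoke \cite[Lemma 4.2.2]{Da}, which is precisely the ground-state transformation lemma: because $(H - \la)\psi = 0$ and $\psi > 0$, the transformed semigroup $e^{-\wt Ht}$ is a symmetric Markov semigroup on $L^2(\bR^d, \psi(x)^2\,dx)$. Concretely it is contractive, positivity-preserving, and preserves the constant function $1$. This step uses only strict positivity of $\psi$ and the ground-state equation, both of which are in hand from the previous step.

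Finally I would invoke \cite[Theorem 4.5.4]{Da} to upgrade Markov to ultracontractive. Davies's theorem asserts boundedness of the transformed semigroup from $L^2(\bR^d,\psi^2\,dx)$ into $L^\infty(\bR^d,\psi^2\,dx)$ for every $t > 0$, under precisely the two-sided polynomial bound on $|W|$ with the gap condition $\wh a_2 < 2\wh a_1 - 2$ hypothesized here. The gap condition is what ensures, via Agmon-type exponential decay estimates on $\psi$ (or equivalently a suitable log-Sobolev inequality for $\wt H$), that $e^{-\wt Ht}$ admits a bounded kernel with respect to $\psi^2\,dx$.

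The main obstacle is really just bookkeeping: aligning normalizations, sign conventions, and the exponent labels $\wh a_i,\wh c_i$ between Davies's statements and the present setting. Since the hypotheses on $W$ have been phrased in exactly the form Davies uses, no new estimates are required and the theorem follows by quoting these two results.
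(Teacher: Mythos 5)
Your proposal matches the paper exactly: the paper itself gives no independent proof, but simply records that Theorem \ref{T:Davies1} is a consequence of \cite[Lemma 4.2.2]{Da} (the ground-state transformation producing a symmetric Markov semigroup on $L^2(\bR^d,\psi^2\,dx)$) and \cite[Theorem 4.5.4]{Da} (ultracontractivity under the two-sided polynomial bound with the gap condition $\wh a_2 < 2\wh a_1 - 2$). Your intermediate observations — compact resolvent from $W\to\infty$, strict positivity and simplicity of the ground state, unitarity of $Uf = \psi f$ — are all correct and are exactly the bookkeeping needed to place the problem in Davies's framework, so no gap remains.

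One minor note: the normalization $\|\psi\|_{L^2}=1$ is not what makes $U$ unitary (that holds for any choice of positive ground state); it merely makes $\psi^2\,dx$ a probability measure. Also, the "for each $t \geq 0$" in the statement should really read "$t > 0$", as you implicitly recognize — at $t=0$ the identity is not $L^2\to L^\infty$ bounded. Neither issue affects the validity of your argument.
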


\begin{thm}\label{T:Davies2} 
    Let $e^{-Ht}$ be an ultracontractive symmetric Markov semigroup on
    $L^2(\Om,\mu )$, where $\Om$ is a locally compact, second countable
    Hausdorff space and $\mu$ is a Borel measure on $\Om$. If $\mu
    (\Om)<\infty$, then each eigenfunction of $H$ belongs to $L^{\infty}(\Om,\mu
    )$.  
\end{thm}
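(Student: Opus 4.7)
The plan is to exploit the identity $e^{-Ht}\phi = e^{-\la t}\phi$ satisfied by any eigenfunction, combined with the defining property of ultracontractivity. Let $\phi\in L^2(\Om,\mu)$ be an eigenfunction of $H$ with eigenvalue $\la$. Since the semigroup $e^{-Ht}$ is generated by the self-adjoint operator $H$ (symmetry being part of the hypothesis of the theorem), the spectral theorem for self-adjoint operators gives $e^{-Ht}\phi = e^{-\la t}\phi$ as an equality in $L^2(\Om,\mu)$ for every $t\ge 0$.

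Next, fix any $t>0$. Ultracontractivity means precisely that $e^{-Ht}:L^2(\Om,\mu)\to L^\infty(\Om,\mu)$ is a bounded operator; let $c_t$ denote its operator norm. Applying this to the eigenfunction $\phi$ yields
\[
e^{-\la t}\|\phi\|_{L^\infty} = \|e^{-Ht}\phi\|_{L^\infty} \le c_t\|\phi\|_{L^2},
\]
so $\|\phi\|_{L^\infty}\le e^{\la t}c_t\|\phi\|_{L^2}<\infty$, which is the desired conclusion. In particular, no additional estimate is needed beyond the eigenvalue equation, the definition of ultracontractivity, and the fact that $\phi$ already lies in $L^2$.

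Two remarks on the ingredients. First, the assumption $\mu(\Om)<\infty$ ensures that the spectral setup is standard: it gives $L^\infty(\Om,\mu)\subset L^2(\Om,\mu)$, and together with ultracontractivity it implies that $e^{-Ht}$ is Hilbert--Schmidt, so $H$ has compact resolvent and pure point spectrum, making the notion of eigenfunction unambiguous. Second, the main (mild) subtlety is ensuring that the $\mu$-a.e.\ identity $e^{-Ht}\phi = e^{-\la t}\phi$ produced by the spectral calculus is the same $\mu$-a.e.\ identity to which ultracontractivity applies; this is automatic, since the ultracontractive bound is a statement about the unique bounded $L^2\to L^\infty$ extension of $e^{-Ht}$, and any two $L^2$-representatives of $e^{-Ht}\phi$ must agree. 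Thus the only place in the argument where something could in principle go wrong is this matching of equivalence classes, and it resolves itself without further work.
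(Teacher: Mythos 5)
Your proof is correct, and it is the standard one-line argument: apply the bounded operator $e^{-Ht}:L^2\to L^\infty$ to the eigenvalue identity $e^{-Ht}\phi = e^{-\lambda t}\phi$ and divide by $e^{-\lambda t}$. The paper does not supply its own proof of this statement—it cites it directly as part of Theorem 2.1.4 in Davies \cite{Da}—and your argument coincides with Davies'. Your side remark correctly identifies that $\mu(\Omega)<\infty$ is not needed for the inference you carry out but rather for the surrounding spectral facts (compactness of $e^{-Ht}$, discreteness of the spectrum) that make the statement about ``each eigenfunction'' meaningful in Davies' formulation.
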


This next proposition establishes the spectral properties of $A$ that are needed
to carry out the construction of our coupling.  

\begin{prop}\label{P:opbasics} 
    Fix $\ep>0$. The operator $H=-\De+V_{\ep}$ is a self-adjoint operator on
    $L^2(\bR^d)$ with discrete, nonnegative spectrum $\wh\la_k\uparrow\infty$
    and corresponding orthonormal eigenfunctions $\psi_k$. Each $\psi_k$ is
    locally H\"older continuous. Moreover, $\wh\la_0=0$ is simple and $\psi_0$
    is proportional to $\pi$. We define $\mu$ by $\mu(dx)=\pi(x)^2\,dx$ and
    $\vpi=Z^{-1}\mu$, where $Z=\mu (\bR^d)$.  The operator $\wt H$ given by $\wt
    Hf=\pi^{-1}H(\pi f)$ is a self-adjoint operator on $L^2(\vpi)$ with
    eigenvalues $\wh\la_k$ and orthogonal eigenfunctions $\wh\eta_k=\psi_k/\pi$.
    The functions $\wh\eta_k$ have norm one in $L^2(\mu)$, whereas the functions
    $\eta_k=Z^{1/2}\wh\eta_k$ have norm one in $L^2(\vpi)$.  

    For $f\in C_c^\infty(\bR^d)$, we have $-\ep\wt Hf=\ep\De f-\nab F\cdot\nab
    f$. Hence, if we define $A$ by
    \[
        A = \{(f, -\ep\wt Hf): f \in C_c^\infty(\bR^d)\},
    \]
    then $A$ is the generator for the diffusion process given by \eqref{SDE}.
    For each $x\in\bR^d$, \eqref{SDE} has a unique, global solution for all
    time, so that the process $X$ with $X(0)=x$ is a solution to the martingale
    problem for $(A,\de_x)$. The operator $A$ is graph separable, and $\cD(A)$
    is separating and closed under multiplication. The measure $\vpi$ is a
    stationary distribution for $A$. Moreover,
    \[
        \int Af\,d\vpi_k = -\la_k\int f\,d\vpi_k,
    \]
    where $\vpi_k(dx)=\eta_k(x)\vpi(dx)$ and $\la_k=\ep\wh\la_k$. The signed
    measures $\vpi_k$ satisfy $\vpi_k(\bR^d)=0$, and each $\eta_k$ belongs to
    $\ol C(\bR^d)$, the space of bounded, continuous functions on $\bR^d$.
\end{prop}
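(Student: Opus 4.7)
The plan is to leverage standard theory for Schrödinger operators with confining potentials, apply the ground-state transform to connect $H$ with $A$, and use the Davies theorems already quoted to obtain boundedness of the $\eta_k$.

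First I would establish the spectral picture for $H=-\De+V_\ep$. By Lemma \ref{L:Vepquad}, $V_\ep$ is continuous and $V_\ep\to\infty$, so $H$ with its natural form domain is essentially self-adjoint on $C_c^\infty(\bR^d)$ and has compact resolvent. This yields a discrete spectrum $0\le \wh\la_0\le\wh\la_1\le\cdots\to\infty$ with orthonormal eigenfunctions $\psi_k$. Elliptic regularity applied to $-\De\psi_k=(\wh\la_k-V_\ep)\psi_k$ (with $V_\ep\in C^1$ since $F\in C^3$) gives local Hölder continuity. A direct computation from $V_\ep=\De\pi/\pi$ shows $H\pi=0$, and Lemma \ref{L:superquad3} gives $F(x)\ge\wt c_1|x|^{\wt a_1}-\wt c_2$, so $\pi=e^{-F/2\ep}\in L^2\cap L^\infty$ and $Z=\int\pi^2<\infty$. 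Since $\pi>0$, the standard Perron-type argument for Schrödinger operators identifies $\pi$ as a ground state, forcing $\wh\la_0=0$ simple and $\psi_0$ proportional to $\pi$ (with proportionality constant $Z^{-1/2}$).

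Next I would carry out the ground-state transformation. For $f\in C_c^\infty$, using $\nab\pi=-\tfrac1{2\ep}(\nab F)\pi$ and $\De\pi=V_\ep\pi$, a direct calculation gives
\[
    H(\pi f) = -\pi\De f - 2\nab\pi\cdot\nab f,
\]
so $\wt Hf=\pi^{-1}H(\pi f)=-\De f+\tfrac1\ep\nab F\cdot\nab f$, which matches $-\ep\wt Hf=\ep\De f-\nab F\cdot\nab f=Af$. The multiplication map $Uf=Z^{-1/2}\pi f$ is a unitary isomorphism $L^2(\vpi)\to L^2(\bR^d)$ that conjugates $\wt H$ into $H$, so $\wt H$ is self-adjoint on $L^2(\vpi)$ with the same spectrum $\{\wh\la_k\}$ and eigenfunctions $\wh\eta_k=\psi_k/\pi$. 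The normalizations $\|\wh\eta_k\|_{L^2(\mu)}=1$ and $\|\eta_k\|_{L^2(\vpi)}=1$ with $\eta_k=Z^{1/2}\wh\eta_k$ then follow mechanically, and in particular $\eta_0\equiv 1$.

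The main technical step is showing the $\eta_k$ are bounded. I would use Assumption \ref{asmF}(iii) together with Lemma \ref{L:Vepquad} to see that $V_\ep$ satisfies the hypothesis of Theorem \ref{T:Davies1} (with $\wh a_1=a_1$, $\wh a_2=a_2$, and $a_2<2a_1-2$). Hence $e^{-\wt Ht}$ is an ultracontractive symmetric Markov semigroup on $L^2(\bR^d,\psi_0^2\,dx)$, which up to the constant $Z^{-1}$ is $L^2(\vpi)$. Since $\vpi(\bR^d)=1<\infty$, Theorem \ref{T:Davies2} gives $\wh\eta_k\in L^\infty$; combined with the local Hölder continuity of $\psi_k$ and the smoothness and strict positivity of $\pi$, one concludes $\eta_k\in\ol C(\bR^d)$. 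This is the step I expect to be the main obstacle, since it requires verifying the precise growth hypothesis of the Davies theorems, and it is the whole reason the assumption $a_2<2a_1-2$ appears in \ref{asmF}.

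The remaining items are routine. Global existence and pathwise uniqueness for \eqref{SDE} follow from the local Lipschitz property of $\nab F$ and non-explosion via $F$ as a Lyapunov function (Itô's formula on $F(X_\ep)$ together with \eqref{superquad1}). Graph separability of $A$ and the closure of $\cD(A)=C_c^\infty(\bR^d)$ under multiplication, together with its separating property, are standard. Stationarity of $\vpi$ is integration by parts: for $f\in\cD(A)$,
\[
    \int Af\,d\vpi = \tfrac1Z\int(\ep\De f-\nab F\cdot\nab f)\pi^2\,dx = 0,
\]
since $\nab\pi^2=-\ep^{-1}(\nab F)\pi^2$. For the eigenmeasure identity, using self-adjointness of $\wt H$ on $L^2(\vpi)$ and $\wt H\eta_k=\wh\la_k\eta_k$,
\[
    \int Af\,d\vpi_k = -\ep\int(\wt Hf)\eta_k\,d\vpi
        = -\ep\wh\la_k\int f\eta_k\,d\vpi = -\la_k\int f\,d\vpi_k.
\]
Finally $\vpi_k(\bR^d)=\int\eta_k\eta_0\,d\vpi=0$ for $k\ge 1$ by orthogonality of the $\eta_k$ in $L^2(\vpi)$, completing the proposition.
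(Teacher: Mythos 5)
Your proposal is correct and follows essentially the same route as the paper's proof: compact resolvent from $V_\ep\to\infty$, ground-state transform via the multiplication map $U$, elliptic regularity for Hölder continuity of $\psi_k$, the Davies ultracontractivity theorems for boundedness of $\eta_k$, a Lyapunov argument for non-explosion, and self-adjointness of $\wt H$ for the eigenmeasure identities. The only cosmetic difference is that you invoke a Perron-type argument to conclude $\wh\la_0=0$ is simple with ground state $\pi$, whereas the paper first shows $\langle f,\wt Hf\rangle_{L^2(\mu)}=\int|\nab f|^2\pi^2\ge 0$ via Green's identity and then uses $H\pi=0$ with $\pi\in L^2$; both are standard and equivalent.
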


\begin{proof}
    Note that $V\to\infty$ by Lemma \ref{L:Vepquad}. Therefore, by \cite[Theorem
    XIII.67]{RS}, we have that $H$ is a self-adjoint operator on $L^2(\bR^d)$
    with compact resolvent. It follows (see \cite[pp.  108--109, 119--120, and
    Proposition 1.4.3]{Da}) that $H$ has a purely discrete spectrum and there
    exists a complete, orthonormal set of eigenfunctions
    $\{\psi_k\}_{k=0}^{\infty}$ with corresponding eigenvalues
    $\wh\la_k\uparrow\infty$. Moreover, $\wh\la_0$ is simple and $\psi_0$ is
    strictly positive.  

    Since $V$ is locally bounded, and $(-\De+V-\wh\la_k)\psi_k=0$, \cite[Theorem
    8.22]{GT} implies that, for each compact $K\subset\bR^d$, $\psi_k$ is
    H\"older continuous on $K$ with exponent $\ga(K)$.  

    Define $U:L^2(\mu)\to L^2(\bR^d)$ by $Uf=\pi f$, so that $\wt H=U^{-1}HU$.
    Since $U$ is an isometry, $\wt H$ is self-adjoint on $L^2(\mu)$ and has the
    same eigenvalues as $H$. Note that, for any $f\in\cD(\wt H)$, it follows
    from Green's identity that
    \begin{multline*}
        \langle{f,\wt Hf}\rangle_{L^2(\mu )} = \ang{\pi f,H(\pi f)}_{L^2(\bR^d)}
            = \int |\nab(\pi f)|^2 + \int V(\pi f)^2\\
            = \int |\nab(\pi f)|^2 + \int (\De\pi)\pi f^2
            = \int |\nab(\pi f)|^2 - \int \nab\pi\cdot\nab(\pi f^2).
    \end{multline*}
    Using the product rule, $\nab(gh)=g\nab h+h\nab g$, this simplifies to 
    \begin{multline*}
        \langle{f,\wt Hf}\rangle_{L^2(\mu)}
            = \int (|\nab\pi|^2f^2 + 2f\pi(\nab f \cdot \nab\pi)
                + |\nab f|^2\pi^2 - |\nab\pi|^2f^2
                - \pi(\nab(f^2) \cdot \nab\pi))\\
            = \int (2f\pi(\nab f \cdot \nab\pi)
                + |\nab f|^2\pi^2
                - \pi (\nab(f^2) \cdot \nab\pi))
            = \int |\nab f|^2\pi^2,
    \end{multline*}
    showing that $\wt H$ cannot have a negative eigenvalue. Hence, $\wh\la_0\ge
    0$.  

    By \eqref{superquad3}, we have $\pi\in L^2(\bR^d)$, so that $\pi\in\cD(H)$
    with $H\pi =0$. Hence, since $\wh\la_0$ is nonnegative and has multiplicity
    one, it follows that $\wh\la_0=0$ and $\psi_0$ is proportional to $\pi$.

    Observe that, if $f\in C_c^\infty$, then, using the product rule for the
    Laplacian and the identity $V=\De\pi/\pi$, we have
    \[
        -\wt Hf = -\frac 1\pi\,H(\pi f) = \frac 1\pi(\De(\pi f) - V\pi f)
            = \frac 1\pi(f\De\pi + 2\nab\pi \cdot \nab f + \pi\De f - f\De\pi).
    \]
    Since $2\ep\nab\pi/\pi=-\nab F$, we have $-\ep\wt Hf=\ep\De f-\nab
    F\cdot\nab f$.

    Since $\nab F$ is locally Lipschitz, \eqref{SDE} has a unique solution up to
    an explosion time (see \cite[Theorem V.38]{Pro}). Since
    $\lim_{|x|\rightarrow\infty}F=\infty$ by assumption and
    $\lim_{|x|\rightarrow\infty}AF(x)=\infty$ by Lemma 4.2, it follows that $F$
    is a Liapunov function for $X_\ep$ proving that $X_\ep$ does not explode.  

    By \cite[Remark 2.5]{Ku98}, $A$ is graph separable. Clearly $\cD(A)$ is
    closed under multiplication. Since $\cD(A)$ separates points and $\bR^d$ is
    complete and separable, $\cD(A)$ is separating (see \cite[Theorem
    3.4.5]{EK}).

    If $f\in C_c^\infty$, then
    \[
        \int Af\,d\vpi = -\ep\langle 1,\wt Hf\rangle_{L^2(\vpi)}
            = -\ep\langle\wt H1,f\rangle_{L^2(\vpi)}=0,
    \]
    so that $\vpi$ is a stationary distribution for $A$. For $k\ge 1$, since
    $\vpi_k(dx)=\eta_k(x)\vpi(dx)$, we have
    \[
        \int Af\,d\vpi_k = -\ep\langle\eta_k,\wt Hf\rangle_{L^2(\vpi)}
            = -\ep\langle\wt H\eta_k,f\rangle_{L^2(\vpi)}
            =-\la_k\int f\,d\vpi_k.
    \]
    Also, $\vpi_k(\bR^d)=\ang{\eta_k,1}_{L^2(\vpi)}=0$, since $\eta_k$ and
    $\eta_0=1$ are orthogonal.

    Finally, since $\eta_k=Z^{1/2}\psi_k/\pi$ and $\psi_k$ is locally H\"older
    continuous, it follows that each $\eta_k$ belongs to $C(\bR^d)$, and the
    fact that they are bounded follows from Theorems \ref{T:Davies1} and
    \ref{T:Davies2}.
\end{proof}

\subsection{The coupled process}\label{S:coupproc}

By Proposition \ref{P:opbasics}, the pair $(A,m)$ satisfies Assumption 
\ref{A:eigmeas} with $E=\bR^d$, so we have the following.  

\begin{thm}\label{T:mainRd} 
    Let $A$ be the generator for (\ref{SDE}) where $F$ satisfies Assumption
    \ref{asmF}, and let $(-\la_0,\eta_0),\ldots ,(-\la_m,\eta_m)$ be the
    first $m+1$ eigenvalues and eigenvectors of $A$. Let
    $Q\in\bR^{(m+1)\times (m+1)}$ be the generator of a continuous-time Markov
    chain with state space $E_0=\{0,1,\ldots,m\}$ and eigenvalues
    $\{0,-\la_1,\ldots,-\la_m\}$ and eigenvectors $\xi^{(1)},\ldots,\xi^{(m)}$
    such that $\al_i$ defined by \eqref{alphas} is strictly positive. Let $B$ be
    defined as in Definition \ref{D:Bgen}.  
    
    Let $\wt Y$ be a continuous time Markov chain with generator $Q$ and initial
    distribution $p=(p_0,\ldots,p_m)\in \cP(E_0)$. Then there exists a cadlag
    Markov process $(X,Y)$ with generator $B$ and initial distribution $\nu$
    given by
    \begin{equation}
        \nu(\Ga \times \{i\})
            = p_i\al(i,\Ga), \quad \Ga \in \cB(\bR^d),\label{nudef}
    \end{equation}
    such that $Y$ and $\wt Y$ have the same distribution on $D_{E_0}[0,\infty)$,
    and
    \begin{equation}\label{mainRd}
        P(X(t) \in \Ga \mid Y(t) = j) = \int_\Ga \al_j(x)\,\vpi(dx),
    \end{equation}
    for all $t\ge0$, all $0\le j\le m$, and all $\Ga\in\cB(E)$.
\end{thm}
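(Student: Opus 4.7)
The plan is to apply Theorem \ref{T:main} directly; essentially all of the structural work is already contained in Proposition \ref{P:opbasics}, and the proof should amount to a checklist verification followed by one short calculation.

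First, I would verify that the tuple $(A,m,Q,\xi^{(1)},\ldots,\xi^{(m)})$ satisfies Assumption \ref{A:Qmat}. Proposition \ref{P:opbasics} establishes Assumption \ref{A:eigmeas} for $(A,m)$ with $E=\bR^d$: it gives the stationary measure $\vpi$, the eigenfunction representation $\vpi_k(dx)=\eta_k(x)\vpi(dx)$ with $\eta_k\in\ol{C}(\bR^d)$, the identity $\int Af\,d\vpi_k = -\la_k\int f\,d\vpi_k$, and the vanishing total mass $\vpi_k(\bR^d)=0$. Clauses (ii)--(iv) of Assumption \ref{A:Qmat} are built directly into the hypotheses of the theorem: $Q$ has eigenvalues $\{0,-\la_1,\ldots,-\la_m\}$, the $\xi^{(k)}$ are the corresponding right eigenvectors, and $\al_i(x)>0$ for all $x\in\bR^d$ and all $i$.

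Second, I would verify Condition \ref{mapcond} for $A$. Again Proposition \ref{P:opbasics} does the work: $\cD(A)=C_c^\infty(\bR^d)$ is closed under multiplication and is separating in the complete separable space $\bR^d$; $A$ is graph separable; the range $Af=\ep\De f-\nabla F\cdot\nabla f$ lies in $C_c(\bR^d)\subset\ol{C}(\bR^d)$; the martingale problem for $(A,\de_x)$ has the solution $X_\ep$ for every $x$, so $A$ is a pre-generator. With Condition \ref{mapcond} and Assumption \ref{A:Qmat} in hand, Theorem \ref{T:main} produces a solution $(X,Y)$ of the martingale problem for $(B,\nu)$ with $Y\stackrel{d}{=}\wt Y$ on $D_{E_0}[0,\infty)$, and
\[
    P(X(t)\in\Ga\mid\cF_t^Y) = \al(Y(t),\Ga).
\]
Since $X$ is Markov (its martingale problem is well-posed), Lemma \ref{L:small1} makes $(X,Y)$ Markov with generator $B$, which handles the remaining assertions about $(X,Y)$.

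Finally, to pass from \eqref{main} to \eqref{mainRd}, I would observe that $\al(Y(t),\Ga)$ is a function of $Y(t)$ alone, so taking conditional expectation given $Y(t)=j$ gives
\[
    P(X(t)\in\Ga\mid Y(t)=j) = \al(j,\Ga) = \int_\Ga\al_j(x)\,\vpi(dx),
\]
by the definition \eqref{almeas} of $\al(j,\cdot)$. There is no real obstacle here; the substantive analytic content (spectral theory of $H=-\De+V_\ep$, nonexplosion of $X_\ep$, ultracontractivity giving boundedness of the $\eta_k$) has already been absorbed into Proposition \ref{P:opbasics}, and Theorem \ref{T:mainRd} is essentially a corollary of Theorem \ref{T:main} in the diffusion setting.
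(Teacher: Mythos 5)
Your proof is correct and follows essentially the same route as the paper: verify Assumption \ref{A:Qmat} (via Proposition \ref{P:opbasics} for part (i) and the theorem's hypotheses for the rest), verify Condition \ref{mapcond} for $A$ from Proposition \ref{P:opbasics}, apply Theorem \ref{T:main}, and conclude Markovity from well-posedness. You spell out two small steps the paper leaves implicit --- the checklist for Condition \ref{mapcond} and the passage from conditioning on $\cF_t^Y$ in \eqref{main} to conditioning on $Y(t)=j$ in \eqref{mainRd} --- but the substance is identical.
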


\begin{rmk}
    That $Q$ with these properties exists can be seen from \cite[Theorem
    1]{Perfect1953}. Remark \ref{R:Qmat} ensures the existence of the
    eigenvectors.
\end{rmk}

\begin{proof}
    Note that under the assumptions of the theorem,
    $(A,m,Q,\xi^{(1)},\ldots,\xi^{(m)})$ satisfies Assumption \ref{A:Qmat}. By
    Proposition \ref{P:opbasics}, the rest of the hypotheses of Theorem
    \ref{T:main} are also satisfied. Consequently, the process $(X,Y)$ exists,
    and by uniqueness of the martingale problem for $B$, $(X,Y)$ is Markov.
\end{proof}

We can now construct the coupling sequences described in the introduction. For
each $\ep>0$, choose a matrix $Q_\ep$ and eigenvectors $\xi_\ep^{
(1)},\ldots,\xi_\ep^{(m)}$ that satisfy the assumptions of Theorem 
\ref{T:mainRd}. If $(X_\ep,Y_\ep)$ is the Markov process described in Theorem
\ref{T:mainRd}, then the family, $\{(X_\ep, Y_\ep):\ep>0\}$, forms a coupling
sequence.

The coupling sequence is determined by the collection, $\{Q_\ep, \xi_\ep^{
(1)},\ldots,\xi_\ep^{(m)}: \ep>0\}$. By making different choices for the
matrices and eigenvectors, we can obtain different coupling sequences. In our
follow-up paper, we will consider the question of existence and uniqueness of a
coupling sequence that satisfies conditions \eqref{tracking-req} and 
\eqref{rate-match-req}.

\section*{Acknowledgments}

This paper was completed while the first author was visiting the University of
California, San Diego with the support of the Charles Lee Powell Foundation. The
hospitality of that institution, particularly that of Professor Ruth Williams,
was greatly appreciated.

\renewcommand{\theequation}{A.\arabic{equation}}
\appendix
\setcounter{equation}{0}

\section{Appendix}

\subsection{The Markov mapping theorem}\label{S:MMT}

Let $E$ be a complete and separable metric space, $\cB(E)$ the $\si$-algebra of
Borel subsets of $E$, and $\cP(E)$ the family of Borel probability measures on
$E$. Let $M(E)$ be the collection of all real-valued, Borel measurable functions
on $E$, and $B(E)\subset M(E)$ the Banach space of bounded functions with
$\|f\|_\infty=\sup_{x\in E}|f(x)|$. Let $\ol C(E)\subset B(E)$ be the subspace
of bounded continuous functions, while $C(E)$ denotes the collection of
continuous, real-valued functions on $E$. A collection of functions $D\subset\ol
C(E)$ is {\it separating} if $\mu,\nu\in\cP(E)$ and $\int f\,d\mu=\int f\,d\nu$
for all $f\in D$ implies $\mu=\nu$.

\begin{condition}\label{mapcond}
    \begin{enumerate}[(i)]
        \item   $B\subset\ol C(E)\times C(E)$ and $\cD(B)$ is closed under
                multiplication and separating.
        \item   There exists $\psi\in C(E)$, $\psi\ge 1$, such that for each
                $f\in\cD(B)$, there exists a constant $c_f$ such that
                \[
                    |Bf(x)| \le c_f\psi(x), \quad x \in E.
                \]
                (We write $Bf$ even though we do not exclude the possibility
                that $B$ is multivalued. In the multivalued case, each element
                of $Bf$ must satisfy the inequality.)
        \item   There exists a countable subset $B_c\subset B$ such that every
                solution of the (local) martingale problem for $B_c$ is a
                solution of the (local) martingale problem for $B$.
        \item   $B_0f\equiv\psi^{-1}Bf$ is a pre-generator, that is, $B_0$ is
                dissipative and there are sequences of functions
                $\mu_n:E\to\cP(E)$ and $\la_n:E\to[0,\infty)$ such that for each
                $(f,g)\in B$,
                \begin{equation}
                    g(x) = \lim_{n\to\infty}
                        \la_n(x)\int_E (f(y) - f(x))\mu_n(x,dy)\label{gencomp}
                \end{equation}
                for each $x\in E$.
    \end{enumerate}
\end{condition}

\begin{rmk}
    Condition \ref{mapcond}(iii) holds if $B_0$ is graph-separable, that is,
    there is a countable subset $B_{0,c}$ of $B_0$ such that $B_0$ is a subset
    of the bounded, pointwise closure of $B_{0,c}$.

    An operator is a pre-generator if for each $x \in E$, there exists a
    solution of the martingale problem for $(B,\de_x)$.
\end{rmk}

For a measurable $E_0$-valued process $Y$, where $E_0$ is a complete and
separable metric space, let
\[
    \wh\cF^Y_t = \text{completion of }\si\left({
            \int_0^r g(Y(s))\,ds: r \le t, g \in B(E_0)
        }\right) \vee \si(Y(0)).\label{obs}
\]

\begin{thm}\label{T:MMT}
    Let $(S,d)$ and $(E_0,d_0)$ be complete, separable metric spaces. Let $B$
    satisfy Condition \ref{mapcond}. Let $\ga :S\to E_0$ be measurable, and let
    $\wt\al$ be a transition function from $E_0$ into $S$ (that is,
    $\wt\al:E_0\times\cB(S)\to\bR$ satisfies $\wt\al(y,\cdot)\in\cP(S)$ for all
    $y\in E_0$ and $\wt\al(\cdot,\Ga)\in B(E_0)$ for all $\Ga\in\cB(S)$)
    satisfying $\int h\circ\ga(z)\,\wt\al(y,dz)=h(y)$, $y\in E_0$, $h\in
    B(E_0)$, that is, $\wt\al(y,\ga^{-1}(y))=1$. Assume that
    $\wt\psi(y)\equiv\int_S\psi(z)\wt\al(y,dz)<\infty$ for each $y\in E_0$ and
    define
    \[
        C = \left\{{\left({
                \int_S f(z)\wt\al(\cdot,dz), \int_S Bf(z)\wt\al(\cdot,dz)
            }\right): f \in \cD(B)}\right\}.
    \]
    Let $\mu\in\cP(E_0)$ and define $\nu=\int\wt\al(y,\cdot)\,\mu(dy)$.
    \begin{enumerate}[a)] 
        \item   If $\wt Y$ satisfies $\int_0^tE[\wt\psi(\wt Y(s))]\,ds<\infty$
                a.s.~for all $t>0$ and $\wt Y$ is a solution of the martingale
                problem for $(C,\mu)$, then there exists a solution $Z$ of the
                martingale problem for $(B,\nu)$ such that $\wt Y$ has the same
                distribution on $M_{E_0}[0,\infty)$ as $Y=\ga\circ Z$. If $Y$
                and $\wt Y$ are cadlag, then $Y$ and $\wt Y$ have the same
                distribution on $D_{E_0}[0,\infty)$.
        \item   Let ${\bf T}^Y=\{t:Y(t)\text{ is }\wh\cF^Y_t\text{
                measurable}\}$ (which holds for Lebesgue-almost every $t$). Then
                for $t \in {\bf T}^Y$,
                \[
                    P(Z(t) \in \Ga \mid \wh\cF_t^Y)
                        = \wt\al(Y(t),\Ga), \quad \Ga \in \cB(S).
                \]
        \item   If, in addition, uniqueness holds for the martingale problem for
                $(B,\nu)$, then uniqueness holds for the
                $M_{E_0}[0,\infty)$-martingale problem for $(C,\mu)$. If $\wt Y$
                has sample paths in $D_{E_0}[0,\infty)$, then uniqueness holds
                for the $D_{E_0}[0,\infty)$-martingale problem for $(C,\mu)$.
        \item   If uniqueness holds for the martingale problem for $(B,\nu)$,
                then $Y$ restricted to ${\bf T}^Y$ is a Markov process.
    \end{enumerate}
\end{thm}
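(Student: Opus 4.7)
The plan is to follow Kurtz's original strategy for the Markov mapping theorem (cf.\ \cite{Ku98}). The central idea is that, given $\wt Y$ solving the $(C,\mu)$-martingale problem, I will build $Z$ on $S$ such that (i) $Y=\ga\circ Z$ has the same law as $\wt Y$, and (ii) the conditional law of $Z(t)$ given the observation filtration $\wh\cF^Y_t$ is exactly $\wt\al(Y(t),\cdot)$. The $(B,\nu)$-martingale property will then be verified by combining these two facts with the $(C,\mu)$-martingale property of $\wt Y$.

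For part (a), I would first use the pre-generator structure of $B_0=\psi^{-1}B$ from Condition \ref{mapcond}(iv) to approximate $B$ by bounded jump generators $B^{(n)}f(x)=\la_n(x)\int (f(y)-f(x))\mu_n(x,dy)$ so that $\psi^{-1}B^{(n)}f\to \psi^{-1}Bf$ pointwise. For each $n$, a genuine Markov jump process on $S$ solves the martingale problem for $B^{(n)}$, and one can explicitly construct a coupled pair $(Z^{(n)},\wt Y^{(n)})$ on $S\times E_0$ whose $S$-marginal is this process and whose $E_0$-coordinate equals $\ga\circ Z^{(n)}$, with the kernel $\wt\al$ built in at the initial time and at every jump of $Z^{(n)}$. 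A time change by $\psi$ together with the integrability hypothesis $\int_0^tE[\wt\psi(\wt Y(s))]\,ds<\infty$ yields tightness and permits passage to a subsequential limit, producing $Z$ and identifying the law of $Y=\ga\circ Z$ with that of $\wt Y$; Condition \ref{mapcond}(i)--(iii) ensures that the limiting object actually solves the martingale problem for all of $B$, not merely for the countable $B_c$.

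Next I would verify the martingale property. For $f\in\cD(B)$, set $\ol f(y)=\int f\,d\wt\al(y,\cdot)$ and $\ol{Bf}(y)=\int Bf\,d\wt\al(y,\cdot)$, so that $(\ol f,\ol{Bf})\in C$. Since $\wt Y$ solves the $(C,\mu)$-martingale problem, the process $\ol f(Y(t))-\ol f(Y(0))-\int_0^t\ol{Bf}(Y(s))\,ds$ is an $\wh\cF^Y$-martingale. Using the conditional distribution identity from (b), namely $E[f(Z(t))\mid \wh\cF^Y_t]=\ol f(Y(t))$ and the analogous identity for $Bf$ under the integrability bound on $\psi$, a projection argument upgrades this to the required $(B,\nu)$-martingale identity for $Z$ in its own filtration.

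Parts (b)--(d) then fall out by standard arguments. Part (b) is the conditional distribution property built into the construction; for $t\in\bfT^Y$, the $\wh\cF^Y_t$-measurability of $Y(t)$ is what makes the formula meaningful at a single $t$. For (c), two solutions $\wt Y_1,\wt Y_2$ of $(C,\mu)$ lift via (a) to two solutions $Z_1,Z_2$ of $(B,\nu)$ which agree in law by uniqueness, so their projections $\ga\circ Z_i$ agree in law. For (d), uniqueness of the $(B,\nu)$-martingale problem forces $Z$ to be Markov by the standard shift-and-condition argument, and then $E[h(Y(s+t))\mid \wh\cF^Y_s]$ reduces via (b) to a function of $Y(s)$ alone. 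The main obstacle throughout is the joint construction of $(Z,Y)$ with the precise conditional distribution property $P(Z(t)\in\cdot\mid \wh\cF^Y_t)=\wt\al(Y(t),\cdot)$: the pre-generator machinery supplies existence, but one must build the coupled approximations on a common probability space, pass to the limit carefully using the $\wt\psi$-integrability bound to control the time change, and check that the conditional identity is preserved in the limit and holds for Lebesgue-a.e.\ $t$.
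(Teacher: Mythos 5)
The paper itself does not prove Theorem \ref{T:MMT}: it is quoted as a special case of Corollary 3.3 of \cite{KN11}, with \cite[Corollary 3.5]{Ku98} as the precursor, so the only ``proof'' in the paper is that citation. Your sketch does identify the right ingredients from those references (approximation using the pre-generator structure of $\psi^{-1}B$, control via the integrability of $\wt\psi(\wt Y(s))$, and the conditional-distribution identity as the heart of the matter), so as a description of where the theorem comes from it is reasonable.

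As a proof outline, however, it has a genuine gap, and in fact a circularity. You verify the $(B,\nu)$-martingale property of $Z$ in part (a) by invoking the identity $E[f(Z(t))\mid\wh\cF^Y_t]=\int f\,d\wt\al(Y(t),\cdot)$ from part (b), while part (b) is asserted to be ``built into the construction'' whose limit passage you do not carry out; each half leans on the other. Moreover, the direction of the ``projection argument'' is backwards: knowing that the $\wh\cF^Y_t$-conditional expectations of $f(Z(t))$ produce a $C$-martingale for the small filtration $\wh\cF^Y_t$ cannot be upgraded to the statement that $f(Z(t))-\int_0^tBf(Z(s))\,ds$ is a martingale for the \emph{larger} filtration generated by $Z$ itself; projecting down is easy, lifting up is exactly what must be proved. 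In \cite{Ku98} and \cite{KN11} this is handled differently: one first shows that the one-dimensional distributions $\nu_t=\int\wt\al(y,\cdot)P(\wt Y(t)\in dy)$ satisfy the forward equation for $B$ (this is where the bound $\int_0^tE[\wt\psi(\wt Y(s))]\,ds<\infty$ and Condition \ref{mapcond}(ii) enter), then invokes the forward-equation-to-martingale-problem existence results (which is where the pre-generator condition and the countable core $B_c$ are used, via approximation by bounded jump generators and a time change by $\psi$) to produce a solution $Z$ with those one-dimensional distributions, and only then establishes the conditional-distribution formula through the filtered martingale problem machinery, rather than imposing $\wt\al$ by hand at the jumps of an approximating process and preserving it in the limit. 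Your treatment of (c) and (d) is fine in spirit, but (d) also needs the filtered-martingale-problem uniqueness statement (uniqueness for $(B,\wt\al(y,\cdot))$ started from the conditional law), not merely the Markov property of $Z$. So the proposal correctly names the main obstacle but does not actually overcome it; as written, the central step of (a)--(b) is not established.
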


\begin{rmk}
    If $Y$ is cadlag with no fixed points of discontinuity (that is $Y(t) =
    Y(t-)$ a.s.~for all $t$), then $\wh\cF^Y_t=\cF_t^Y$ for all $t$.
\end{rmk}

\begin{rmk}
    The main precursor of this Markov mapping theorem is \cite[Corollary
    3.5]{Ku98}. The result stated here is a special case of Corollary 3.3 of
    \cite{KN11}.
\end{rmk}


\end{document}